\documentclass[12pt, reqno]{amsart}
\usepackage[english]{babel}
\usepackage{amsmath,amsfonts,amssymb,amsthm}
\usepackage{hyperref}
\usepackage{graphicx}
\usepackage{color}
\usepackage{caption}
\usepackage{subcaption}
\usepackage{longtable}

\bibliographystyle{unsrturl}

\usepackage[margin=2.5cm]{geometry}

\newtheorem{thm}{Theorem}[section]
\newtheorem{lem}[thm]{Lemma}
\newtheorem{cor}[thm]{Corollary}

\theoremstyle{definition}
\newtheorem{defin}[thm]{Definition}
\newtheorem{exam}[thm]{Example}
\theoremstyle{remark}
\newtheorem*{rem}{Remark}

\newcommand { \ib }[1] {\textit{\textbf{#1}}}

\newcommand{\R}{\mathbb{R}}

\newcommand{\Z}{\mathbb{Z}}
\newcommand{\N}{\mathbb{N}}

\begin{document}
\renewcommand{\ib}{\mathbf}
\renewcommand{\proofname}{Proof}
\renewcommand{\phi}{\varphi}
\newcommand{\conv}{\mathrm{conv}}
\newcommand{\tr}{\mathrm{tr}}

\title{On triangular paperfolding patterns}
\author{Alexey~Garber}

\address{School of Mathematical \& Statistical Sciences, The University of Texas Rio Grande Valley, 1 West University blvd., Brownsville, TX, 78520, USA.}
\email{alexeygarber@gmail.com}

\date{\today}

\begin{abstract}
We introduce patterns on a triangular grid generated by paperfolding operations. We show that in case these patterns are defined using a periodic sequence of foldings, they can also be generated using substitution rules and compute eigenvalues and eigenvectors of the corresponding matrices. We also prove that densities of all basic triangles are equal in these patterns.
\end{abstract}

\maketitle

\section{Introduction}

A classical paperfolding sequence can be constructed by taking a strip of paper of length $2^n$ and folding it $n$ times at the center. If we then unfold the resulting strip of length $1$ back into the strip of length $2^n$, then we can read a sequence of valleys and peaks. The one-sided paperfolding sequence is defined as taking the limit as $n$ goes to infinity of the pattern read from the left (see \cite[{\tt A014577}]{oeis}). The two-sided paperfolding sequence is defined as the same pattern read from the center of the strip again as $n$ goes to infinity (see \cite[Sect. 4.5]{BG}).

The properties of the paperfolding sequences have been studied by many authors \cite{All92,AM, DK, DMP, Gar67}. In particular, the paper \cite{DMP} by Dekking, Mend\`es-France, and van der Poorten exhibits a set of substitution rules to generate the paperfolding sequence.

The substitution sequence was generalized to the higher-dimensional case by Ben-Abraham, Quandt and Shapiraa \cite{BQS}. The paper \cite{GN} by G\"ahler and Nilsson provides substitution rules for the corresponding paperfolding structures and studies various properties of the corresponding patterns.

In this paper we study a family of patterns that can be generated using a paperfolding approach. The patterns are formed by peaks and valleys on a triangular grid and obtained by repeated folding of a regular triangle along its midsegments. On each stage the foldings are done through upper or lower halfspace of the ambient $3$-dimensional space, but the choice of upper or lower halfspace is independent for different stages in general.

The main results of the paper are the following. In Theorem \ref{thm:sub} we prove that if we always perform the foldings through the upper halfspace, then the resulting pattern can be generated using substitution rules. In Theorem \ref{thm:periodic} we generalize this result for the case of periodic choices between upper and lower halfspaces. In Theorem \ref{thm:eigensystem} we obtain all eigenvalues and eigenvectors for the corresponding substitution matrices. Finally, in Theorem \ref{thm:densities} we prove that for any choice of halfspaces (not necessarily periodic) densities of all types of unit triangles constituting the corresponding pattern are equal.

The paper is organized as follows. Section \ref{sec:bano} contains basic definitions of triangular folding patterns and related triangular tilings.

Section \ref{sec:lemmas} contains preliminary lemmas that describe the structure of these patterns.

In Section \ref{sec:allup} we prove that the triangular folding pattern defined by the all-up sequence of elementary foldings can be defined using substitution rules. Section \ref{sec:periodic} is devoted to the proof of existence of substitution rules for any periodic sequence of elementary foldings. Also in Section \ref{sec:periodic} we obtain eigenvalues and eigenvectors of substitution matrices for folding patterns defined using periodic sequences of elementary foldings.

In Section \ref{sec:density} we prove that for any sequence of elementary foldings (periodic or not) the densities of different types of triangles present in the corresponding triangular tiling exist and are equal. The types of triangles here are defined using Definition \ref{def:8colors}.

\section{Basic notions}\label{sec:bano}

\begin{defin}
We will work only with regular triangles of a fixed triangular lattice (or grid) $\mathcal{L}$ obtained from a grid of horizontal lines and its rotations by $\pi/3$ and $2\pi/3$ (see Figure \ref{pict:lattice}). We assume that the smallest triangle in this grid has side length $1$, and we call every triangle of $\mathcal L$ with side length $1$ a {\it unit triangle}.

\begin{figure}[!ht]
\begin{center}
\includegraphics[scale=0.7]{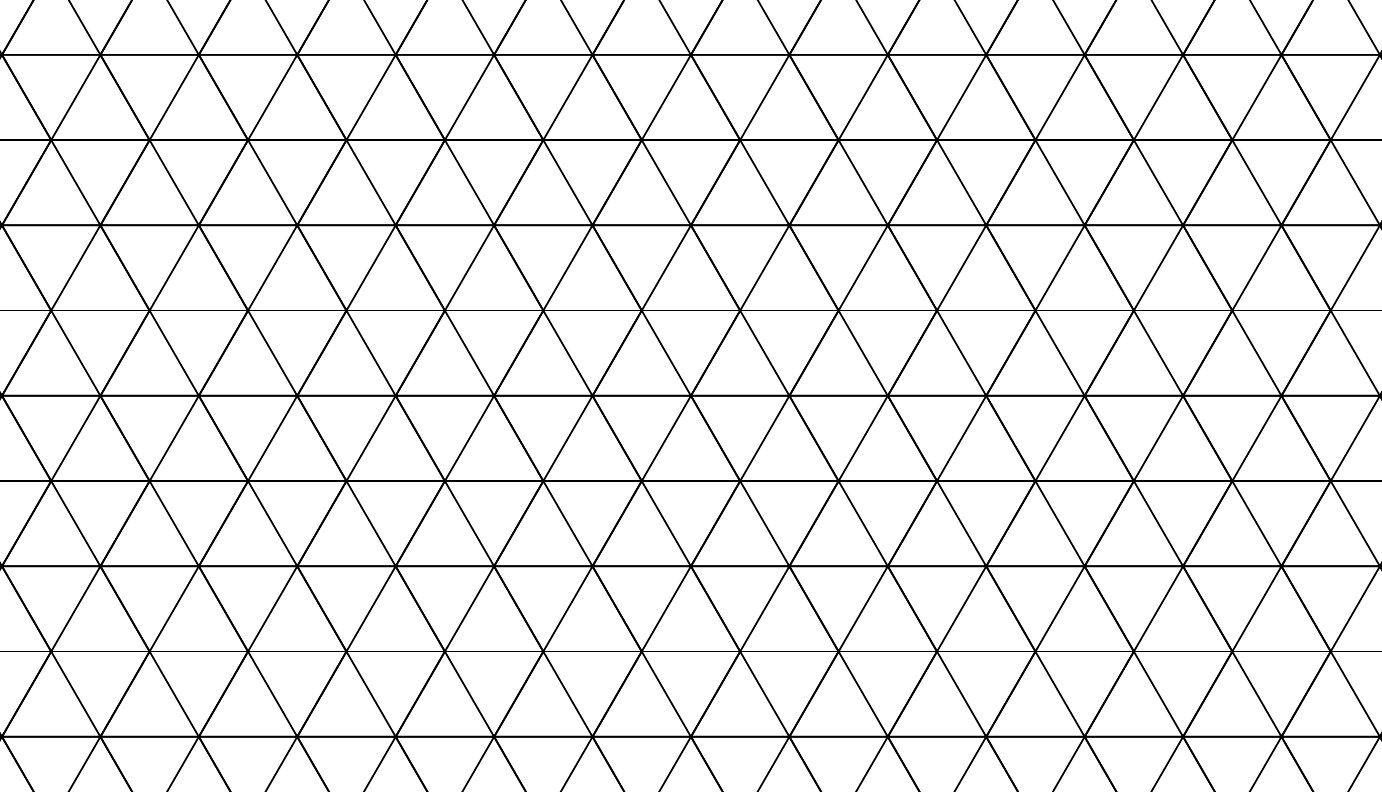} 
\caption{The triangular grid $\mathcal{L}$.}
\label{pict:lattice}
\end{center}
\end{figure}

A regular triangle with integer side length $n\geq 1$ of the grid $\mathcal{L}$ is called {\it positive} if its third vertex is higher than its horizontal side and it is called {\it negative} otherwise, see Figure \ref{pict:triangle}.

\begin{figure}[!ht]
\begin{center}
\includegraphics[scale=0.7]{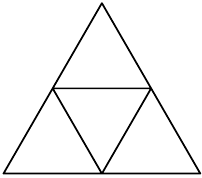}  
\hskip 2cm
\includegraphics[scale=0.7]{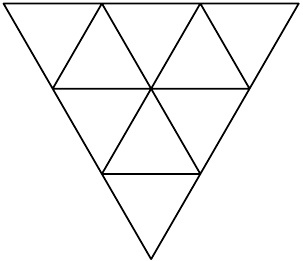}  
\caption{A positive triangle with side length $2$ and a negative triangle with side length $3$.}
\label{pict:triangle}
\end{center}
\end{figure}
\end{defin}

\begin{defin}\label{def:elementary}
By an {\it elementary triangular folding} we mean a folding of a regular triangle, positive or negative, with an even side length $2a$ into a regular triangle, negative or positive respectively, with side length $a$ (covered four times) by folding it in the midsegments (see Figure \ref{pict:folding}).

In general, the separate foldings in each midsegment are independent, so each can be done through the upper or through the lower halfspace of the ambient three-dimensional space. However, in this paper we study only the elementary foldings with all foldings in each midsegment done through the same halfspace. If all three foldings are done through the upper halfspace, then we call the elementary folding a {\it folding up} and encode it with ``$+$''. Alternatively we call the elementary folding a {\it folding down} if all three foldings are done through the lower halfspace and encode such elementary foldings with ``$-$''.
\end{defin}

\begin{figure}[!ht]
\begin{center}
\includegraphics[scale=0.7]{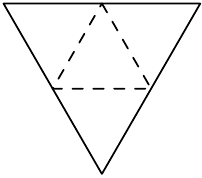}   
$\longrightarrow$
\includegraphics[scale=0.7]{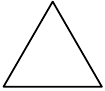}   
\caption{Elementary triangular folding.}
\label{pict:folding}
\end{center}
\end{figure}

In the sequel, the dashed triangle of side length $a$ in the left part of Figure \ref{pict:folding} is called the {\it central part} of the initial triangle with side length $2a$, and the other three triangles with side length $a$ (each with one dashed and two solid sides) are called the {\it side parts} of the initial triangle with side length $2a$.

If we unfold the four-layered triangle with side length $a$ back into the triangle with side length $2a$, then the midsegments will form peaks if the elementary folding was a folding down, or valleys if the elementary folding was a folding up. In the former case we will color the midsegments with blue (peaks), and in the latter case with red (valleys).

Suppose $\mathcal{F}=\{a_1,\ldots,a_k\}$ is a sequence of elementary foldings, so each $a_i$ is either $+$ or $-$ and encodes whether the corresponding folding should be done through upper or lower halfspace forming valley or peaks respectively.

We construct the {\it folding pattern} corresponding to $\mathcal{F}$ in the following way. We take a regular triangle $T$ (made from paper) with side length $2^k$ which is positive if $k$ is even and negative if $k$ is odd. Then we fold $T$ according to the elementary foldings in $\mathcal{F}$ performing $a_k$ first, then $a_{k-1}$, and so on, finishing with $a_1$. In the end we get a positive triangle with side length $1$ consisting of $4^k$ layers (of paper).

If we unfold the unit triangle back to its initial size, then every unit segment of the grid $\mathcal{L}$ in the interior of  $T$ will become a valley or a peak. If we unfold a triangle with side length $2^n (n<k)$ that has a folding pattern of peaks and valleys, then the resulting triangle with side length $2^{n+1}$ will have the same pattern inside its central part, but the patterns in the side parts will be different because each valley will become a peak after a single unfolding, and each peak will become a valley. We will describe this dependence in more detail later.

\begin{defin}
The {\it folding pattern} $P_\mathcal{F}$ corresponding to $\mathcal{F}$ is the coloring of all peaks in the resulting unfolded triangle $T$ with blue, and all valleys with red. An example of the folding pattern corresponding to $\mathcal{F}=\{a_1,\ldots,a_6\}$ where each $a_i$ is done through the upper halfspace, or $\mathcal{F}=\{+,+,+,+,+,+\}$ for short, is shown in Figure \ref{pict:allup-6}.

\begin{figure}[!ht]
\begin{center}
\includegraphics[width=\textwidth]{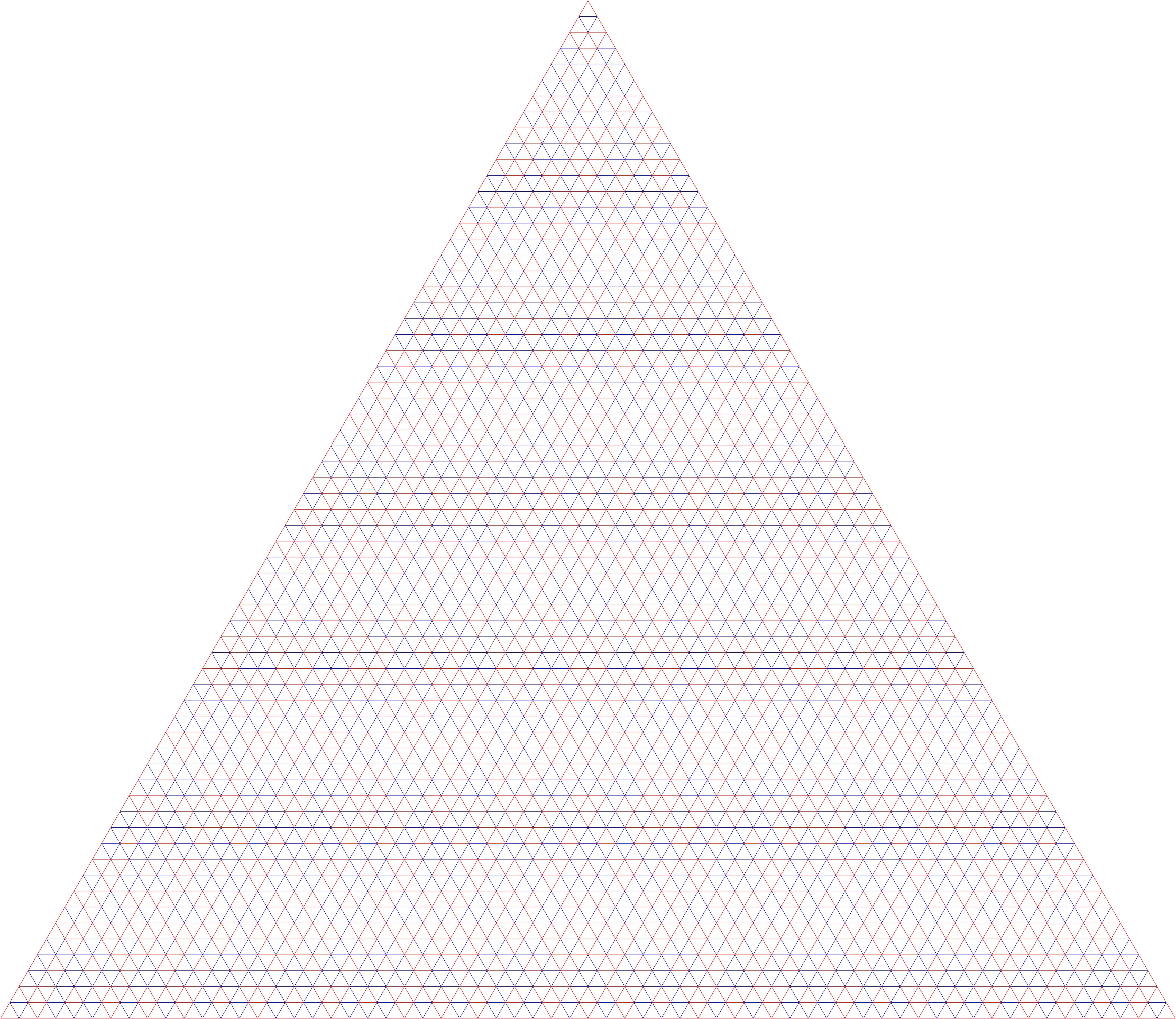}   
\caption{The folding pattern for sequence of six elementary foldings up (for a certain choice of boundary).}
\label{pict:allup-6}
\end{center}
\end{figure}
\end{defin}

If a sequence $\mathcal{F}'=\{a_i\}_{i=1}^m$ is a subsequence of $\mathcal{F}=\{a_i\}_{i=1}^{m+1}$, then the pattern $P_{\mathcal{F}'}$ is a subpattern of $P_\mathcal{F}$. Namely, the (interior of the) central triangle with side length $2^m$ of $P_\mathcal{F}$ will be colored according to the pattern $P_{\mathcal{F}'}$. This allows us to introduce a coloring of the grid $\mathcal{L}$ as the pattern corresponding to an infinite sequence of elementary foldings.

\begin{defin}\label{def:pattern}
Let $\mathcal{S}=\{a_i\}_{i=1}^\infty$ be a sequence of elementary foldings. We fix a positive unit triangle $T_0$ of $\mathcal{L}$ and define $P_\mathcal{S}$ as the limit of patterns $P_{\mathcal{S}_n}$ where $\mathcal{S}_n=\{a_i\}_{i=1}^n$ and each $P_{\mathcal{S}_n}$ has $T_0$ as its central triangle, i.e. the unique triangle of $\mathcal L$ that contains the center of $P_{\mathcal S_n}$.

The limit is naturally defined and can be treated as the limit in local topology (see Definition \ref{def:hull} below). Each unit segment of the grid $\mathcal{L}$ belongs to the interior of $(-2)^k$-dilation of $T_0$ for some $k$, so this unit segment is colored with blue or red in the pattern $P_{\mathcal{S}_k}$. Moreover, the color of this segment does not depend on the choice of appropriate $k$ as sequences $\mathcal{S}_k$ and $\mathcal{S}_m$ for $m>k$ give the same coloring of all common segments of $\mathcal{L}$ except the boundary of $P_{\mathcal S_k}$.
\end{defin}

\begin{defin}\label{def:hull}
Using the pattern $P_\mathcal{S}$ we can define the {\it hull $H_\mathcal{S}$} corresponding to an infinite sequence $\mathcal{S}$ as a topological closure of the set of $\R^2$-translations of $P_\mathcal S$ in the local topology. In this topology, two patterns are said to be $\varepsilon$-close if we can shift one of these patterns by a vector of length at most $\varepsilon$, and the patterns will coincide in the $\frac{1}{\varepsilon}$-ball centered at the origin. It is clear that the resulting hull does not depend on the choice of $T_0$. We refer to \cite[Sect. 5.4]{BG} for a more detailed definition of the geometric hull associated with a given pattern.
\end{defin}

The pattern $P_\mathcal{S}$ is given as a coloring of the grid $\mathcal{L}$, however we can transform it into a tiling of the plane with decorated regular triangles of eight types defined by one of four colors and positivity/negativity. Namely, we have four colors for positive triangles based on the number of red sides from  0 to 3 with decoration for the triangles with 1 or 2 red sides opposite to the only red or blue side of the triangle, see Figure \ref{pict:colored_tr}. Similarly we have four colors for negative triangles, see Figure \ref{pict:colored_tr} as well.

\begin{figure}[!ht]
\begin{center} 
\begin{tabular}{lll}
\includegraphics[scale=0.7]{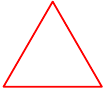}$\longrightarrow$\includegraphics[scale=0.7]{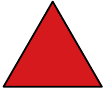}&\hskip 3cm&\includegraphics[scale=0.7]{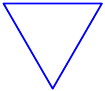}$\longrightarrow$\includegraphics[scale=0.7]{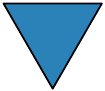}\\
\includegraphics[scale=0.7]{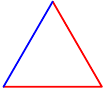}$\longrightarrow$\includegraphics[scale=0.7]{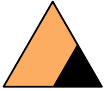}&\hskip 3cm&\includegraphics[scale=0.7]{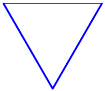}$\longrightarrow$\includegraphics[scale=0.7]{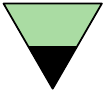}\\
\includegraphics[scale=0.7]{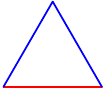}$\longrightarrow$\includegraphics[scale=0.7]{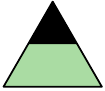}&\hskip 3cm&\includegraphics[scale=0.7]{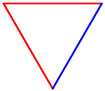}$\longrightarrow$\includegraphics[scale=0.7]{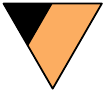}\\
\includegraphics[scale=0.7]{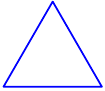}$\longrightarrow$\includegraphics[scale=0.7]{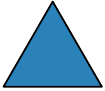}&\hskip 3cm&\includegraphics[scale=0.7]{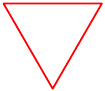}$\longrightarrow$\includegraphics[scale=0.7]{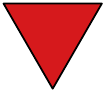}\\
\end{tabular}
\caption{The rules for constructing the tiling $T_\mathcal{S}$ from the pattern $P_\mathcal{S}$.}
\label{pict:colored_tr}
\end{center}
\end{figure}

\begin{defin}\label{def:8colors}
We will call the resulting tiling $T_\mathcal{S}$ the {\it folding tiling} associated with $\mathcal{S}$.
\end{defin}

For example, the tiling given in Figure \ref{pict:allup-6} transformed into the tiling with triangles of four colors is given in Figure \ref{pict:allup6-color}. We removed the decoration in this figure; as we show later the decoration is redundant.

\begin{figure}[!ht]
\begin{center} 
\includegraphics[width=\textwidth]{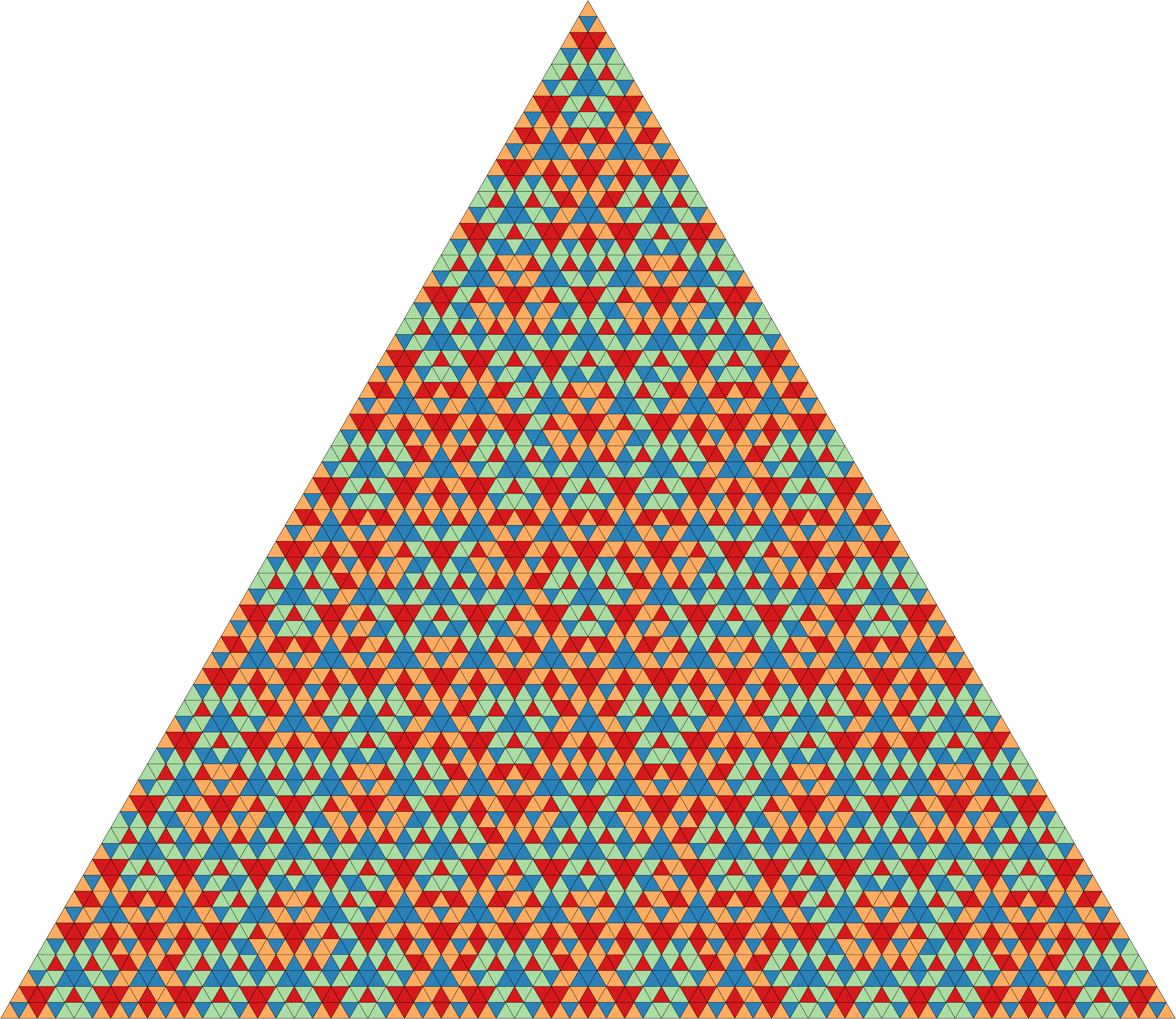}
\caption{The tiling with triangles of four colors without decorations corresponding to the sequence of six elementary foldings up (for a certain choice of boundary).}
\label{pict:allup6-color}
\end{center}
\end{figure}

It is clear that the pattern $P_\mathcal S$ and the tiling $T_\mathcal S$ can be obtained from each other using local information only. This is summarized in the following (informal) definition of MLD tilings, see also \cite{BSJ}. We can use this definition for patterns on $\mathcal L$ as well.

\begin{defin}
A tiling $T$ is {\it locally derivable} from a tiling $S$ (with radius $r$), if local equality (congruency or translation equivalence) of $r$-balls of $S$ at two points $x$ and $y$ implies equality of local structures of $T$ at $x$ and $y$.

Two tilings $S$ and $T$ are {\it MLD} ({\it mutually locally derivable}), if $S$ is locally derivable from $T$ and vice versa. We refer to \cite[Sect. 5.2]{BG} for more details. 
\end{defin}

So, the pattern $P_\mathcal{S}$ and the tiling $T_\mathcal{S}$ belong to the same MLD class. Therefore, the corresponding hulls are MLD as well. Later we will show that removal of the decoration on triangles preserves the MLD class of a folding tiling.

\section{Preliminary lemmas}\label{sec:lemmas}

Recall that $T_0$ is the positive unit triangle we use as our ``reference point'', that is, if we construct a folding pattern for a finite sequence, then $T_0$ is exactly the triangle we get after performing all elementary foldings. Our first goal is to split $\mathcal L$ into a disjoint union of ``layers'' that are dilations of each other. As a simple description, we take the set given in Figure \ref{pict:layer}, and dilate it with factors $(-2)^k$ with respect to the center of the shaded triangle. Below is a rigorous description of such sets and their properties.

Let $O$ denote the center of $T_0$. Let $\xi_1$ be the vertical vector of length $2\sqrt3$ pointing downwards and let $\xi_2$ and $\xi_3$ be counterclockwise rotations of $\xi_1$ by $\frac{2\pi}{3}$ and $\frac{4\pi}{3}$ respectively. Then the grid $\mathcal{L}$ can be written as the following disjoint union of lines

$$\mathcal{L}=\bigsqcup\limits_{i=1}^3\bigsqcup\limits_{n\in\Z}\ell_{i,n}$$
where $\ell_{i,n}=\{\mathbf{x}\in\R^2|(\mathbf{x},\xi_i)=3n+1\}$ and $(\cdot,\cdot)$ denotes the standard inner product in $\R^2$.

We split the grid $\mathcal L$ into layers $\mathcal L_k$ for $k\in \N$ according to the following definition. Let $k$ be a natural number. Let $$\Lambda_k:=2^k\Z+\frac{1}{3}((-2)^{k-1}-1).$$ The set $\Lambda_k$ is exactly the set of all integers $n$ such that the largest power of $2$ that divides $3n+1$ is $2^{k-1}$ so $\Z$ is split into the disjoint union $$\Z=\bigsqcup\limits_{k\in \N}\Lambda_k.$$ 

\begin{defin} 
We define the $k$th {\it layer} $\mathcal{L}_k$ of $\mathcal{L}$ as the union 
$$\mathcal{L}_k=\bigsqcup\limits_{i=1}^3\bigsqcup\limits_{n\in \Lambda_k}\ell_{i,n}$$ so the line $\ell_{i,n}$ belongs to the layer $\mathcal{L}_k$ if and only if the largest power of $2$ that divides $3n+1$ is $2^{k-1}$.

Then the grid $\mathcal L$ is split into the union $$\mathcal{L}=\bigsqcup\limits_{k\in\mathbb{N}}\mathcal{L}_k.$$
\end{defin}

The layer $\mathcal{L}_1$ is shown in Figure \ref{pict:layer}. 
\begin{figure}[!ht]
\begin{center}
\includegraphics[width=0.8\textwidth]{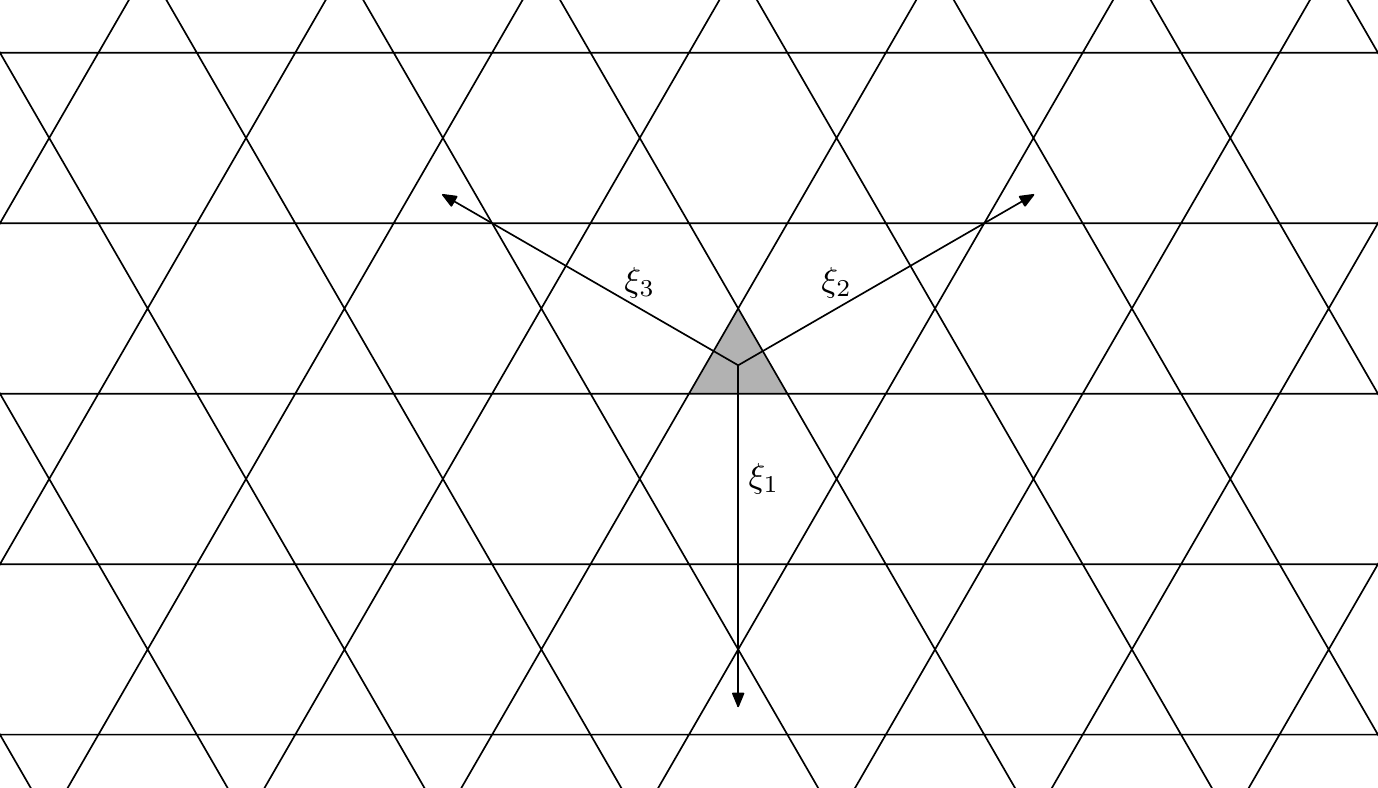}
\caption{The layer $\mathcal{L}_1$ with shaded triangle $T_0$ and the vectors $\xi_i$.}
\label{pict:layer}
\end{center}
\end{figure}

We also observe that $\mathcal{L}_{k+1}$ is the $(-2)$-dilation of $\mathcal{L}_k$ with respect to $O$. Indeed, the line $\ell_{i,n}$ is given by equation $(\mathbf{x},\xi_i)=3n+1$. After $(-2)$-dilation we get the line $(\mathbf{x},\xi_i)=-2(3n+1)=-6n-2=3(-2n-1)+1$ or the line $\ell_{i,-2n-1}$. If $\ell_{i,n}\in \mathcal{L}_k$, then the largest power of $2$ that divides $3n+1$ is $2^{k-1}$ and therefore the largest power of $2$ that divides $3(-2n-1)+1=-2(3n+1)$ is $2^k$ and $\ell_{i,-2n-1}\in \mathcal L_{k+1}$.

From Figure \ref{pict:layer} we can see that $\mathcal{L}_1$ can be viewed as a collection of triangles with side length $1$. Similarly, since the layer $\mathcal{L}_2$ is a $(-2)$-dilation of $\mathcal{L}_1$, it can be viewed as a collection of triangles with side length $2$. In general, the layer $\mathcal{L}_k$ is a collection of triangles with side length $2^{k-1}$.

\begin{defin}\label{def:tr+hex}
We will call these triangles with side length $2^{k-1}$ the {\it positive} or {\it negative triangles of the layer $\mathcal{L}_k$}.

The remaining regular hexagons with side length $2^{k-1}$ are called the {\it hexagons of the layer $\mathcal{L}_k$}.
\end{defin}

\begin{lem}\label{lem:layer}
Let $\mathcal{S}=\{a_i\}_{i=1}^\infty$ be an infinite sequence of elementary foldings, and let $P_\mathcal{S}$ be the corresponding folding pattern. Then for every $k\in \mathbb{N}$ the coloring of $\mathcal{L}_k$ in $P_\mathcal{S}$ is defined by $a_k$ only. 

Particularly, if $k$ is odd and $a_k=+$  or if $k$ is even and $a_k=-$, then positive triangles of $\mathcal{L}_k$ are colored with red and negative triangles of $\mathcal{L}_k$ are colored with blue. Alternatively, if $k$ is odd and $a_k=-$ or if $k$ is even and $a_k=+$, then positive triangles of $\mathcal{L}_k$ are colored with blue and negative triangles of $\mathcal{L}_k$ are colored with red.
\end{lem}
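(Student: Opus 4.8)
The plan is to track, stage by stage, which unfolding operations affect the segments of a given layer $\mathcal{L}_k$, and to show that only the $k$-th elementary folding matters. Recall the key observation already established in the excerpt: when we unfold a triangle of side length $2^n$ into one of side length $2^{n+1}$, the pattern in the central part is unchanged, while in each of the three side parts every peak turns into a valley and vice versa. I would first reformulate the construction of $P_{\mathcal{S}_n}$ as a sequence of $n$ unfoldings: start from the fully folded unit triangle $T_0$ (whose three edges are not segments of the interior), and at step $j$ (for $j = 1, \dots, n$) unfold from side length $2^{n-j}$ to side length $2^{n-j+1}$. The midsegments created at the very first unfolding — i.e. the segments lying on the boundary between the central part and the side parts of the final triangle of side $2$ sitting inside $T_0$ — are exactly the midsegments whose valley/peak status is determined directly by the elementary folding $a_n$ (folding up $\Rightarrow$ valley/red, folding down $\Rightarrow$ peak/blue, by the convention fixed right after Definition~\ref{def:elementary}). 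After that first unfolding, these particular segments sit in the \emph{interior} of a triangle of side $2$, hence lie in the central part of every larger triangle, so by the central-part-invariance they never change color again during the remaining $n-1$ unfoldings. This shows the segments created at stage $j$ of the unfolding (counting from the fully folded state) are colored according to $a_{n-j+1}$ and then frozen.

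Next I would identify the geometry: the segments that appear as midsegments precisely at the $j$-th unfolding step form exactly the layer $\mathcal{L}_j$ of the grid, up to the appropriate dilation. This is essentially bookkeeping with the definition of $\Lambda_k$ and the already-noted fact that $\mathcal{L}_{k+1}$ is the $(-2)$-dilation of $\mathcal{L}_k$ about $O$: a line $\ell_{i,n}$ lies in $\mathcal{L}_k$ iff $3n+1$ is divisible by exactly $2^{k-1}$, and a segment of $\mathcal{L}$ first becomes a midsegment of some triangle of side $2^{k-1}$ — rather than a side of a smaller triangle — exactly when it lies on a line of $\mathcal{L}_k$. Combining this with the previous paragraph, the coloring of $\mathcal{L}_k$ in $P_{\mathcal{S}_n}$ (for any $n \geq k$) depends only on $a_k$, and then passing to the limit $P_{\mathcal{S}}$ (which agrees with $P_{\mathcal{S}_n}$ on all of $\mathcal{L}_k$ once $n$ is large enough, by Definition~\ref{def:pattern}) gives the first assertion.

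For the second assertion I would pin down the orientations. At the stage when the segments of (a dilate of) $\mathcal{L}_k$ are created as midsegments, they are the midsegments of triangles of side $2^k$ being unfolded into triangles of side... — more precisely, these segments bound the central sub-triangle of side $2^{k-1}$ inside a triangle of side $2^k$. The reference triangle $T$ that is folded has side $2^k$ and is positive when $k$ is even, negative when $k$ is odd; I would check that a positive triangle of $\mathcal{L}_k$ corresponds to the central part (which keeps the sign of the ambient triangle of side $2^k$ after the relevant unfolding, but note the unfolding to side $2^k$ from side $2^{k-1}$ flips the sign, so one must be careful about exactly which triangle is "central" versus "side"), and that a negative triangle of $\mathcal{L}_k$ corresponds to a side part. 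Then the valley/peak rule gives: folding up at stage $k$ colors the newly-created midsegments red, folding down colors them blue, and the alternation in the statement (the extra flip when $k$ has a given parity) comes precisely from whether the triangle of side $2^k$ in question is positive or negative, i.e. from the parity of $k$. Matching the four cases in the lemma statement to the two choices of $a_k$ and the two parities of $k$ is then a finite check.

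The main obstacle I anticipate is \emph{not} any deep argument but rather getting the parity/orientation bookkeeping exactly right: there are two independent sign-flips in play — the $(-2)$ (not $+2$) scaling relating consecutive layers, which flips positive/negative each time, and the peak/valley swap that occurs in side parts at each unfolding — and one must be scrupulous about which triangle is declared positive at side length $2^k$, which sub-triangles are "central" vs. "side", and at which stage of the unfolding the segments of $\mathcal{L}_k$ become frozen. I would handle this by fixing once and for all the picture in Figure~\ref{pict:layer} (the layer $\mathcal{L}_1$ with $T_0$ shaded), verifying the statement by hand for $k=1$ and $k=2$ directly from the definition of the folding pattern, and then propagating to general $k$ via the $(-2)$-dilation relation $\mathcal{L}_{k+1} = -2\,\mathcal{L}_k$, which simultaneously accounts for the scaling, the sign flip, and shifts the relevant unfolding stage by one.
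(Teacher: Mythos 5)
Your general strategy (every interior segment receives its color at the unfolding step where it first appears and is then frozen forever after, so one only needs to identify that step and the relevant $a_j$) is sound as far as it goes, but it proves strictly less than the lemma. At the unfolding from side length $2^{k-1}$ to $2^k$, the \emph{new crease lines} of $a_k$ form only the single central triangle of $\mathcal{L}_k$ (the midsegment triangle of the side-$2^k$ triangle centred at $O$); your claim that ``the segments that appear as midsegments precisely at the $j$-th unfolding step form exactly the layer $\mathcal{L}_j$'' is false. All the other triangles of $\mathcal{L}_k$ inside a large triangle are created at \emph{later} unfolding steps, as reflected copies of previously coloured segments with their colours swapped (one swap for each subsequent unfolding in which they land in a side part). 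So your argument pins down the colour of one triangle of $\mathcal{L}_k$ and says nothing about the rest of the layer. The missing ingredient --- which is the actual content of the paper's induction step --- is the verification that reflection in a midsegment line of the side-$2^{n+1}$ triangle maps lines of $\mathcal{L}_k$ to lines of $\mathcal{L}_k$ (via the explicit equations $(\mathbf{x},\xi_i)=3m+1$ and the fact that the power of $2$ dividing the reflected right-hand side is unchanged when $k<n+1$) and exchanges positive with negative triangles of that layer. Only because this orientation swap happens in lockstep with the colour swap does the uniform rule ``positive $\leftrightarrow$ one colour, negative $\leftrightarrow$ the other'' propagate from the central triangle to the whole layer; without it, the reflected copies could a priori violate the stated description.

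Your fallback --- verify $k=1,2$ ``by hand'' and propagate via $\mathcal{L}_{k+1}=(-2)\mathcal{L}_k$ --- does not close this gap: checking $k=1$ already means determining the colours of infinitely many unit triangles of $\mathcal{L}_1$ scattered through arbitrarily large unfoldings, which again requires the reflection analysis or an induction on the unfolding stage; and the dilation step would need a precise statement of how $P_{\mathcal{S}}$ is obtained from the pattern of the shifted sequence (essentially Lemma \ref{lem:pattern=+-}, which the paper proves \emph{using} the present lemma), together with careful tracking of the simultaneous orientation flip and colour swap. Finally, your indexing of the unfolding steps is internally inconsistent: step $j$ is said to go from side $2^{n-j}$ to $2^{n-j+1}$, yet you start from the fully folded unit triangle and attribute the first unfolding's creases to $a_n$ while describing them as the sides of $T_0$, which are the creases of $a_1$. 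That is a lesser issue than the missing reflection argument, but it should be repaired before the parity bookkeeping in your last paragraph can be carried out reliably.
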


\begin{proof}
We will prove by induction that for any $n\geq k$ the intersection of $\mathcal{L}_k$ with the triangle with side length $2^n$ centered at $O$ (positive if $n$ is even, negative if $n$ is odd) is colored according to the description in the statement.

The basis of induction for $n=k$ is obvious because in this case the pattern inside the triangle with side length $2^k$ is exactly the pattern $P_{\mathcal{S}_k}$ defined by the sequence $\mathcal{S}_k=\{a_i\}_{i=1}^k$, and the elementary folding $a_k$ defines only the coloring of the midsegments of this triangle. The midsegments are colored with red if $a_k$ is a folding up, and with blue if $a_k$ is a folding down. These midsegments form a positive triangle if $k$ is odd and a negative triangle if $k$ is even.

The induction step $n\longrightarrow n+1$ can be shown by analyzing an additional unfolding of the triangle with side length $2^n$ into a triangle with side length $2^{n+1}$. The pattern inside the central triangle stays the same and it is also unfolded in each of three side triangles, see Figure \ref{pict:layercol} for an illustration with $k=1$ and $n=3$.

In order to construct the pattern in the three side triangles with side length $2^n$ we need to perform reflections in midsegments of the triangle with side length $2^{n+1}$ and change the colors of the new segments because each of the unfolded peaks in a side triangle becomes a valley and vice versa. Also each positive triangle of $\mathcal{L}_k$ unfolds into a negative triangle of $\mathcal{L}_k$ and each negative triangle of $\mathcal{L}_k$ unfolds into a positive triangle of $\mathcal{L}_k$. Indeed, without loss of generality we can assume only unfolding with respect to the horizontal midsegment; (the line that contains) this midsegment has equation $(\mathbf x,\xi_1)=(-2)^{n}$,  we denote this line as $\ell$. Suppose we reflect the line $\ell_{i,m}\in \mathcal L_k$ with equation $(\mathbf x,\xi_i)=3m+1$ about $\ell$. If $i=1$, then the reflection has equation $(\mathbf x,\xi_1)=-3m-1+2(-2)^n$ and the largest power of $2$ that divides the right-hand side is $2^{k-1}$ (same as for $3m+1$) because $k<n+1$, so this line belongs to $\mathcal L_k$ as well.

For two other options, we assume that $i=2$ (the case $i=3$ is similar), then the reflection of $\ell_{2,m}$ about $\ell$ has equation $(\mathbf x,\xi_3)=-3m-1-(-2)^n$. Indeed, the reflection has equation $(\mathbf x,\xi_3)=\alpha$ for some $\alpha$. Since the left-hand sides of equations for $\ell$, $\ell_{2,m}$ and its reflection add up to $0$, then the right-hand sides must add to $0$ as well as all three lines have one common point. This implies $\alpha = -3m-1-(-2)^n$. 

The line with equation $(\mathbf x,\xi_3)=-3m-1-(-2)^n$ belongs to $\mathcal L_k$ as well. Therefore, all sides of a triangle from $\mathcal L_k$ are reflected into lines of $\mathcal L_k$, so the reflected triangle is also from $\mathcal L_k$. It is clear that each negative triangle is reflected into a positive triangle, and each positive triangle is reflected into a negative triangle. Moreover, the colors have to be swapped as each peak is unfolded into a valley and vice versa.

Together these two observations (change of color and change of positiveness of triangle) complete the induction step.

\begin{figure}[!ht]
\begin{center} 
\includegraphics[width=0.8\textwidth]{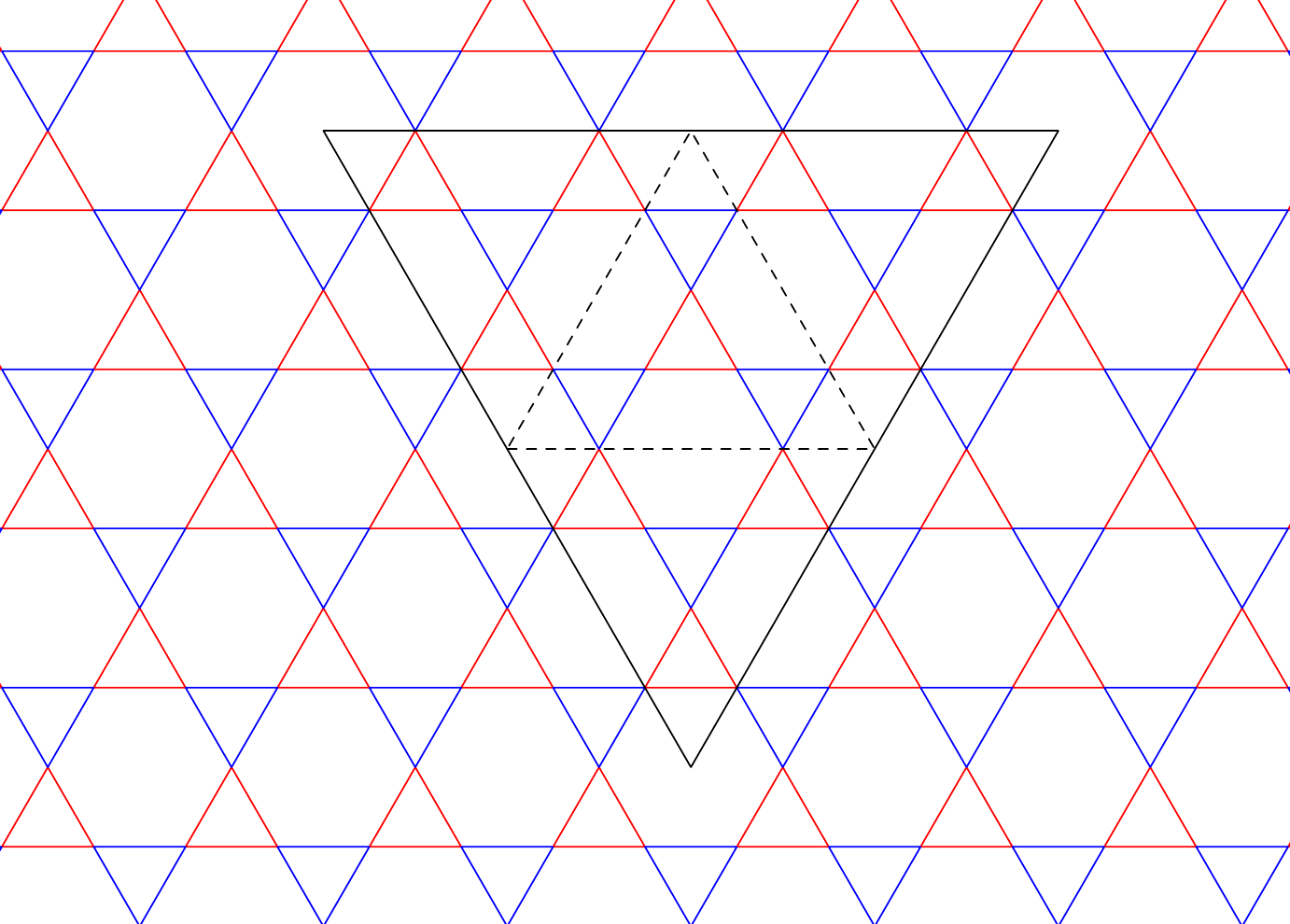}
\caption{A coloring of the layer $\mathcal{L}_1$ if $a_1$ is a folding up. The triangles with side length 8 (solid lines) and 4 (dashed lines) are shown to illustrate the induction step from Lemma \ref{lem:layer}.}
\label{pict:layercol}
\end{center}
\end{figure}
\end{proof}

The next lemma describes all possible local colorings at a vertex of the grid $\mathcal L$.

\begin{lem}\label{lem:hexagon}
Let $\mathcal S$ be an infinite sequence of elementary foldings. Among six segments at a vertex of the pattern $P_\mathcal{S}$, two consecutive segments are colored with one color (red or blue) and four others are colored with the the other color (blue or red respectively), see Figure $\ref{pict:local}$.
\end{lem}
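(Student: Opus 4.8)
The plan is to examine the three grid lines of $\mathcal L$ passing through the given vertex $v$, decide which layers $\mathcal L_k$ they belong to, locate $v$ inside the triangle--hexagon tilings of those layers, and then read off the colors of the six incident unit segments using Lemma~\ref{lem:layer}.

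First I would record the arithmetic constraint on $v$. Through $v$ pass exactly three lines of $\mathcal L$, one in each direction: $\ell_{1,n_1}$, $\ell_{2,n_2}$, $\ell_{3,n_3}$. Setting $m_i:=3n_i+1$, so that $(\mathbf v,\xi_i)=m_i$, and using $\xi_1+\xi_2+\xi_3=\mathbf 0$ gives $m_1+m_2+m_3=0$; moreover each $m_i\equiv1\pmod3$ is nonzero. Three nonzero integers summing to $0$ cannot all have the same $2$-adic valuation (writing $m_i=2^{k-1}m_i'$ with $m_i'$ odd, the sum $m_1'+m_2'+m_3'$ would be an odd integer, hence nonzero), while the minimal valuation must be attained at least twice; so exactly two of the $m_i$ attain the minimum. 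In terms of layers this says that exactly two of the three lines through $v$, say $\ell_i$ and $\ell_j$, lie in the minimal layer $\mathcal L_k$ among the three, and the remaining line $\ell_l$ lies in a strictly higher layer $\mathcal L_{k'}$ with $k'>k$.

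Next I would analyze the local geometry. Since $\ell_i,\ell_j\in\mathcal L_k$ both pass through $v$, the point $v$ is a vertex of the triangle--hexagon tiling of $\mathcal L_k$, where the two lines cross forming two $60^\circ$ wedges and two $120^\circ$ wedges. The two opposite $60^\circ$ wedges each carry a triangular face of $\mathcal L_k$ (the hexagonal faces have angle $120^\circ$), namely the regular triangle of side $2^{k-1}$ with apex $v$ spanning that wedge; as the two wedges are antipodal, these two triangles are images of one another under the point reflection in $v$, hence have opposite orientation, hence, by Lemma~\ref{lem:layer}, opposite colors. The two unit segments of $\ell_i$ at $v$, and likewise those of $\ell_j$, lie on the two triangle sides emanating from $v$, so they receive opposite colors; furthermore, the two segments bounding the same $60^\circ$ wedge --- one on $\ell_i$, one on $\ell_j$ --- are consecutive around $v$ and both inherit that triangle's color. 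For the third line $\ell_l\in\mathcal L_{k'}$: it is the only $\mathcal L_{k'}$-line through $v$, so $v$ is not a vertex of the $\mathcal L_{k'}$-tiling and instead lies in the relative interior of one of its edges, which has length $2^{k'-1}\ge2$; therefore both unit segments of $\ell_l$ at $v$ are sub-segments of this single edge and carry the color of the one triangle of $\mathcal L_{k'}$ bounding it.

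Finally, reading the six segments in cyclic order around $v$ one obtains the pattern $A,A,C,B,B,C$, where $A$ and $B$ are the colors of the two triangular wedges of $\mathcal L_k$ at $v$ (so $\{A,B\}=\{\text{red},\text{blue}\}$) and $C$ is the common color of the two $\ell_l$-segments. Since $C$ equals $A$ or $B$, one color appears on exactly two of the six segments and those two are consecutive (they bound one of the triangular wedges), while the other color appears on the remaining four --- which is exactly the statement, and agrees with Figure~\ref{pict:local}. I expect the delicate point to be precisely this local geometry of the layer tilings: verifying that two crossing lines of $\mathcal L_k$ meet with their two triangular faces occupying the opposite $60^\circ$ wedges and related by the point reflection in $v$ (so that Lemma~\ref{lem:layer} forces them to carry opposite colors), and dually that a line of a strictly higher layer passes through $v$ in the relative interior of one of its tiling edges. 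Everything else is bookkeeping with $2$-adic valuations and with the cyclic order of the six edge directions.
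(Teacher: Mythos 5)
Your proof is correct and follows essentially the same route as the paper: you identify that exactly two of the three grid lines through $v$ lie in a common minimal layer $\mathcal L_k$, so that $v$ is the common apex of two oppositely oriented (hence, by Lemma \ref{lem:layer}, oppositely colored) triangles of $\mathcal L_k$ occupying the two $60^\circ$ wedges, while the third line lies in a strictly higher layer and contributes two collinear segments of a single color. The only difference is in how this structural fact is established: the paper recurses on $(-2)\mathcal L$ to find the layer in which $v$ is a vertex of two triangles, whereas you derive it from the ultrametric inequality for $2$-adic valuations applied to $m_1+m_2+m_3=0$ --- a clean, fully rigorous substitute for the paper's rather terse argument.
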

\begin{proof}
First of all, we claim that a vertex $v$ of $\mathcal{L}$ is a vertex of two triangles of some layer $\mathcal{L}_k$. Indeed, it is true if a vertex belongs to $\mathcal{L}_1$, and if it doesn't, then it is in the union $\bigsqcup\limits_{k=2}^\infty\mathcal{L}_k=(-2)\mathcal{L}$, and we can repeat these arguments. 

Therefore four of six segments at $v$ are segments of $\mathcal{L}_k$ and they form two pairs of opposite segments such that two neighbor segments (two segments with angle $\frac{\pi}{3}$ between them) are colored with red, and two segments opposite to these red segments are colored with blue. The remaining two segments are opposite and belong to a side of triangle of some layer $\mathcal{L}_n$ for $n>k$, therefore they are colored both with red or both with blue. Both these cases are exactly the cases from Figure \ref{pict:local}.
\end{proof}

\begin{figure}[!ht]
\begin{center} 
\includegraphics[scale=0.7]{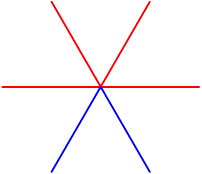}
\hskip 0.15\textwidth
\includegraphics[scale=0.7]{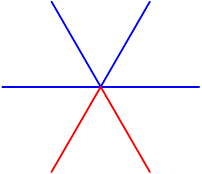}
\caption{Two possible arrangements at a vertex of $\mathcal{L}$.}
\label{pict:local}
\end{center}
\end{figure}

\begin{defin}
Let $P$ be a pattern in $\R^2$. We say that $P$ is {\it non-periodic} if the equality $P+\mathbf{t}=P$ for a vector $\mathbf{t}\in\R^2$ implies $\mathbf t=0$.

Any vector $\mathbf{t}$ that satisfies $P+\mathbf t=P$ is called a {\it period} of $P$.
\end{defin}

\begin{thm}\label{thm:aperiodic}
For every sequence $\mathcal{S}=\{a_i\}_{i=1}^\infty$ of elementary foldings, the pattern $P_\mathcal{S}$ is non-periodic.
\end{thm}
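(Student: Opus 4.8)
The plan is to argue by contradiction: suppose $\mathbf t\neq 0$ is a period of $P_\mathcal{S}$, and derive a conflict with the layer structure established in Lemma \ref{lem:layer}. The key observation is that the coloring of each layer $\mathcal{L}_k$ is rigidly prescribed: positive triangles of $\mathcal{L}_k$ get one color and negative triangles the other, with the color determined by $a_k$ and the parity of $k$. In particular, for each $k$ the union of segments of $\mathcal{L}_k$ that are colored (say) blue is a $(-2)^{k-1}$-scaled copy of a fixed periodic pattern, whose fundamental domain is a rhombus built from one positive and one negative triangle of $\mathcal{L}_k$ together with the surrounding hexagons. So $P_\mathcal{S}$ restricted to $\mathcal{L}_k$ has the full period lattice $(-2)^{k-1}\Lambda$, where $\Lambda$ is the (fixed, $k$-independent) period lattice of the layer-$1$ coloring pattern.

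First I would make precise that a period $\mathbf t$ of $P_\mathcal{S}$ must in particular be a period of the coloring of each individual layer $\mathcal{L}_k$, since $\mathbf t$ preserves the grid $\mathcal L$ (hence permutes its lines, hence must preserve the divisibility-by-$2$ invariant $3n+1$ that defines the layers — a translation by $\mathbf t$ sends $\ell_{i,n}$ to $\ell_{i,n'}$ with $3n'+1 = 3n+1 + (\mathbf t,\xi_i)$, and for this to match $\mathbf t$ must have $(\mathbf t,\xi_i)$ divisible by a common structure). Then $\mathbf t\in (-2)^{k-1}\Lambda$ for every $k\in\mathbb N$. Since the lattices $(-2)^{k-1}\Lambda$ shrink to $\{0\}$ as $k\to\infty$ (their covolume grows like $4^{k-1}$, or more directly $\bigcap_k (-2)^{k-1}\Lambda = \{0\}$ because any fixed nonzero lattice vector fails to lie in $(-2)^{k-1}\Lambda$ once $2^{k-1}$ exceeds its coordinates), we conclude $\mathbf t = 0$, a contradiction.

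The main obstacle I anticipate is the bookkeeping in the step "$\mathbf t$ is a period of the coloring of $\mathcal{L}_k$." One has to check that a translation preserving the two-colored pattern $P_\mathcal{S}$ genuinely respects the decomposition $\mathcal L = \bigsqcup_k \mathcal L_k$; this is not purely combinatorial about colors but uses that the $\xi_i$-coordinates of lattice translations are integers (after suitable normalization) and that $\mathbf t$ must be a lattice vector of $\mathcal L$ in the first place. The latter follows because $P_\mathcal{S}$ determines $\mathcal L$ (the colored segments \emph{are} the segments of $\mathcal L$), so any period of $P_\mathcal{S}$ is a symmetry of $\mathcal L$, i.e. an element of the translation lattice of $\mathcal L$. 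Once $\mathbf t$ is known to be a lattice vector, its $\xi_i$-coordinates determine which lines map to which, and the layer-preservation reduces to an elementary $2$-adic valuation argument on $3n+1$.

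An alternative, essentially equivalent, route avoids layers entirely: by Lemma \ref{lem:hexagon} every vertex of $\mathcal L$ carries a well-defined "special direction" — the angle bisector of the two consecutive like-colored segments — and a period $\mathbf t$ must preserve this direction field. But Lemma \ref{lem:layer} shows that for the unique vertex-layer containing a given vertex $v$, say $\mathcal L_k$, the "two consecutive segments of the minority color" are exactly the two sides of $\mathcal L_{k}$-triangles meeting at $v$, so the special direction at $v$ points toward the center of a triangle of $\mathcal L_k$; tracking how $k$ varies over nearby vertices again forces $\mathbf t$ into $(-2)^{k-1}\Lambda$ for all $k$. I would present the first version as the main argument since it is the most direct, and relegate this second description to a remark if space permits.
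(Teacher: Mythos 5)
Your overall strategy --- reduce periodicity of $P_\mathcal{S}$ to periodicity of the coloring of each layer $\mathcal{L}_k$ separately, then note that the period lattices of the layers shrink to $\{0\}$ --- is exactly the strategy of the paper, and your final step is fine. The gap is in the step you yourself flag as the main obstacle: showing that a period $\mathbf{t}$ of $P_\mathcal{S}$ preserves each layer $\mathcal{L}_k$. Your proposed justification (``$\mathbf{t}$ permutes the lines of $\mathcal{L}$, hence must preserve the divisibility-by-$2$ invariant of $3n+1$'') does not work: a translation preserving the grid $\mathcal{L}$ shifts the right-hand side $3n+1$ by $(\mathbf{t},\xi_i)$, and this does \emph{not} in general preserve the $2$-adic valuation of $3n+1$. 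For instance, the shortest lattice translation of $\mathcal{L}$ sends $\ell_{2,0}$ (with $3n+1=1$, so in $\mathcal{L}_1$) to $\ell_{2,1}$ (with $3n+1=4$, so in $\mathcal{L}_3$). So ``$\mathbf{t}$ is a lattice vector of $\mathcal{L}$'' plus arithmetic on $3n+1$ cannot by itself yield layer-preservation; the conclusion that $(\mathbf{t},\xi_i)$ is divisible by $3\cdot 2^{k}$ for all $k$ is essentially what you are trying to prove, so positing that $(\mathbf{t},\xi_i)$ is ``divisible by a common structure'' begs the question.

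The missing idea is that layer-preservation is forced by the \emph{colors}, not by lattice arithmetic (you explicitly say the check is ``not purely combinatorial about colors,'' but in the paper's proof it is exactly that). By Lemma \ref{lem:layer}, every line of $\mathcal{L}_k$ is partitioned into alternating red and blue segments of length exactly $2^{k-1}$; hence a line carrying a segment of length $3\cdot 2^{k-1}$ whose middle third is one color and whose outer thirds are the other color can only belong to $\mathcal{L}_k$, since on a line of $\mathcal{L}_j$ every maximal monochromatic run has length exactly $2^{j-1}$. A color-preserving translation therefore maps $\mathcal{L}_k$ onto $\mathcal{L}_k$, and from there your argument (nonzero periods of $\mathcal{L}_k$ have length at least $2^k$, so no nonzero vector is a common period) closes as intended. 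With this one correction your proof coincides with the paper's.
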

\begin{proof}
Suppose $\mathbf t$ is a period of $P_\mathcal{S}$. We note that each layer $\mathcal{L}_k$ is a collection of lines and that each line of $\mathcal L_k$ is split into blue and red segments of length exactly $2^{k-1}$ in an alternating way. Thus $\mathcal{L}_k+\mathbf{t}=\mathcal{L}_k$ because no line from the translation $\mathcal{L}_k+\mathbf{t}$ can belong to a layer other than $\mathcal{L}_k$ since a segment of length $3\cdot 2^{k-1}$ with the middle third colored with blue and two other thirds colored with red must belong to $\mathcal{L}_k$. Thus $\mathbf{t}$ is a period for every layer $\mathcal{L}_k$.

It is easy to see that every non-zero period of $\mathcal{L}_k$ has length at least $2^k$, thus if $\mathbf t\neq 0$, then $|\mathbf t|\geq 2^k$ for every $k$ which is impossible.
\end{proof}

\begin{rem}
The pattern $P_\mathcal{S}$ is {\it limit periodic} meaning it can be represented as a disjoint union of periodic patterns, the layers $\mathcal{L}_k$, with commensurate periods. We refer to \cite{BG} (Section 4.5 in particular) for more examples of tilings and patterns that are limit periodic and properties of such tilings. We also refer to \cite{FH} for another example of limit periodic pattern that shares many properties with the patterns under study.
\end{rem}

\begin{cor}
The tiling $T_\mathcal{S}$ as well as the hull $H_\mathcal{S}$ are aperiodic.
\end{cor}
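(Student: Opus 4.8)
The plan is to obtain both assertions from Theorem~\ref{thm:aperiodic} by transporting non-periodicity through the MLD relation and, for the hull, through the layer decomposition. For the tiling this is immediate: since $T_\mathcal S$ is mutually locally derivable from $P_\mathcal S$, the pattern $P_\mathcal S$ is recovered from $T_\mathcal S$ by a local rule that commutes with translations, so every period of $T_\mathcal S$ is a period of $P_\mathcal S$ and hence is $\mathbf 0$ by Theorem~\ref{thm:aperiodic}. The only substance is the claim about the hull, where I read ``aperiodic'' as: every element of $H_\mathcal S$ has trivial period group (the same then holds for the tiling hull $H_{T_\mathcal S}$, which is MLD to $H_\mathcal S$). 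The first step I would take is to record that the layer decomposition is locally derivable. By Lemma~\ref{lem:layer} the colouring of $\mathcal L_k$ in $P_\mathcal S$ is a single, explicit periodic colouring $C_k$ of $\mathcal L_k$: along each line of $\mathcal L_k$ the colours alternate in blocks of length exactly $2^{k-1}$, and, as noted in the proof of Theorem~\ref{thm:aperiodic}, a window of length $3\cdot 2^{k-1}$ along a line of $\mathcal L$ decides whether that line belongs to $\mathcal L_k$. Hence there is a translation-equivariant local rule $\Phi_k$, of radius a bounded multiple of $2^k$, that extracts from any colouring of a translate of $\mathcal L$ the coloured sub-pattern carried by its $k$th layer, with $\Phi_k(P_\mathcal S)=C_k$.

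Next I would fix $Q\in H_\mathcal S$, write $Q=\lim_n (P_\mathcal S+\mathbf t_n)$ in the local topology, and apply $\Phi_k$. By continuity and equivariance of $\Phi_k$ we get $Q^{(k)}:=\Phi_k(Q)=\lim_n(C_k+\mathbf t_n)$, so $Q^{(k)}$ lies in the $\R^2$-orbit closure of the periodic pattern $C_k$. That orbit closure is the torus $\R^2/\Gamma_k$, where $\Gamma_k$ is the period lattice of $C_k$, and every point of this torus has period lattice exactly $\Gamma_k$; as observed in the proof of Theorem~\ref{thm:aperiodic}, every non-zero vector of $\Gamma_k$ has length at least $2^k$. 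Thus $Q^{(k)}$ is periodic with period lattice $\Gamma_k$. Finally, if $\mathbf t$ is a period of $Q$ then, $Q^{(k)}$ being obtained from $Q$ by a translation-equivariant local rule, $\mathbf t$ is a period of $Q^{(k)}$, hence $\mathbf t\in\Gamma_k$ and $|\mathbf t|\ge 2^k$; since this holds for every $k$ we conclude $\mathbf t=\mathbf 0$. So every element of $H_\mathcal S$, and therefore of $H_{T_\mathcal S}$, is non-periodic, and $T_\mathcal S$ is non-periodic by the first paragraph.

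The routine parts are the bookkeeping with MLD and the verification that $\Phi_k$ is local --- which is really just a repackaging of the computations already done in Section~\ref{sec:lemmas} and in the proof of Theorem~\ref{thm:aperiodic}. The point that needs genuine care, and which I expect to be the main obstacle, is that one cannot simply quote Theorem~\ref{thm:aperiodic} for an arbitrary hull element, since a limit of translates of a pattern can in principle acquire new symmetries; the fix is to push the limit down to the individual layers, each of which is honestly periodic, so its orbit closure is a torus carrying a single, scale-$2^k$ period lattice. I would want to state that torus fact cleanly (it is standard, e.g.\ via \cite{BG}). An alternative route would be to first prove that $H_\mathcal S$ is minimal --- every patch of $P_\mathcal S$ recurs with bounded gaps, which is believable from the limit-periodic structure --- and then invoke that in a minimal hull either all elements are periodic or none are; but the layer argument seems the most direct.
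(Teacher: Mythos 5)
Your proof is correct and takes essentially the same route as the paper: the paper's argument for the hull is simply to observe that the layer-identification argument of Theorem~\ref{thm:aperiodic} uses only local information and therefore applies to any element of $H_\mathcal{S}$, with the tiling case handled via MLD exactly as in your first paragraph. Your write-up just makes explicit, via the local rules $\Phi_k$ and the orbit closure of the periodic layer colouring, the details the paper leaves implicit.
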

\begin{proof}
Recall that a hull is aperiodic if it does not contain a periodic pattern/tiling (see \cite[Def. 5.12]{BG}, for example). If $P$ is a  pattern from  $H_\mathcal{S}$, then we can use the same arguments to find a contradiction as in the proof of Theorem \ref{thm:aperiodic} as the arguments use only local properties of $P_\mathcal{S}$.

For the tiling $T_\mathcal{S}$ we use that it is MLD equivalent to $P_\mathcal{S}$ (see the proof of Theorem \ref{thm:mld} below), and therefore is aperiodic.
\end{proof}

Now we will show that the decoration of the tiling $T_\mathcal{S}$ can be reconstructed from the undecorated tiling.

\begin{defin}
Let $T'_\mathcal{S}$ denote the {\it undecorated folding tiling} corresponding to an infinite sequence $\mathcal{S}$ of elementary foldings. The tiling $T'_\mathcal{S}$ is obtained from $T_\mathcal{S}$ by removing the decoration of triangles with both red and blue sides.
\end{defin}

It is worth noting that the tiling $T_\mathcal{S}$ has 16 translation types of different triangles, and the tiling $T'_\mathcal{S}$ has only 8 different translation types.

\begin{thm}\label{thm:mld}
The tilings $T_\mathcal{S}$ and $T'_\mathcal{S}$ can be locally reconstructed from each other, that is they belong to the same MLD equivalence class of tilings.
\end{thm}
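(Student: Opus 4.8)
The forward direction is immediate: the tiling $T'_\mathcal{S}$ is obtained from $T_\mathcal{S}$ by the purely local operation of erasing the decoration of each tile, so $T'_\mathcal{S}$ is locally derivable from $T_\mathcal{S}$ with radius $0$. All the content lies in the reverse direction, reconstructing the decoration from $T'_\mathcal{S}$. Since $P_\mathcal{S}$ and $T_\mathcal{S}$ are MLD and the decoration of a triangle with one (resp. two) red sides is, by definition, the vertex opposite its unique red (resp. blue) side, it suffices to recover the colour of every unit segment of $\mathcal{L}$, i.e. the pattern $P_\mathcal{S}$, from the undecorated tiling $T'_\mathcal{S}$, which records only the number of red sides of each triangle.

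The plan for the reverse direction is as follows. Fix a unit segment $e$ of $\mathcal{L}$ and let $T^+,T^-$ be the two triangles meeting along $e$. If $T^+$ (or $T^-$) has red-count $0$ then $e$ is blue, and if it has red-count $3$ then $e$ is red; so I may assume both red-counts lie in $\{1,2\}$, in which case the counts of $T^+,T^-$ alone do not yet determine the colour of $e$. I would then pass to the vertices of $e$: by Lemma \ref{lem:hexagon} the six segments around an endpoint $v$, read cyclically, consist of two consecutive segments of one colour and four of the other, which leaves only $12$ possibilities for their colours. For each of the six triangles around $v$, its red-count equals the number of red segments among its two sides at $v$ plus $0$ or $1$ according to the colour of its third side. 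I would run through the $12$ cyclic colourings and check that the vector of red-counts of the triangles around $v$ and around the other endpoint of $e$ (equivalently, the red-counts of all triangles within radius $1$ or $2$ of $e$, each constrained by Lemma \ref{lem:hexagon}) forces a unique colour for $e$. Doing this for all segments near a point recovers $P_\mathcal{S}$ locally and hence the full decorated tiling $T_\mathcal{S}$, completing the MLD claim.

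The hard part will be certifying that a \emph{bounded} radius suffices — that the ``third side'' ambiguities in the count formula never conspire to leave the colour of $e$ genuinely undetermined. I see two complementary ways to close this gap. Route (i): push the finite case analysis above out to radius $2$ (or $3$), invoking Lemma \ref{lem:hexagon} at each relevant vertex to keep the number of configurations finite, and verifying that every admissible red-count pattern forces one colour on $e$; this is somewhat tedious but routine, and I expect it to be the argument actually used. Route (ii): argue indirectly — layer membership is locally recognizable (a segment lies in $\mathcal{L}_k$ exactly when it is the middle third of a collinear $3\cdot 2^{k-1}$-segment whose outer thirds carry the opposite colour), so every pattern in the hull inherits the rigid layer colouring of Lemma \ref{lem:layer}; one then shows that two patterns in $H_\mathcal{S}$ with identical face red-counts must coincide, and deduces the bounded-radius statement from compactness of the hull. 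Either route reduces the theorem to a finite verification, which is where the real (if unglamorous) work sits.
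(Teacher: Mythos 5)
Your forward direction and your reduction of the reverse direction to ``recover the colour of every unit segment from the red-counts of the tiles'' both match the paper. But the reverse direction as you present it is a plan, not a proof: the decisive step --- that the red-counts in some bounded neighbourhood of a segment $e$, constrained by Lemma \ref{lem:hexagon}, force a unique colour on $e$ --- is exactly the content of the theorem, and you explicitly defer it (``I would run through the $12$ cyclic colourings and check\dots'', ``the hard part will be certifying that a bounded radius suffices''). Neither route (i) nor route (ii) is carried out; route (ii) in particular still requires proving that two patterns in the hull with identical red-counts coincide, which is the same unproved injectivity statement in different clothing. So there is a genuine gap: the finite verification on which everything rests is asserted to be routine but never performed, and it is not obvious a priori that radius $2$ suffices or that no admissible red-count configuration is ambiguous.

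The idea you are missing, and which lets the paper avoid the case bash entirely, is to exploit the layer decomposition. By Lemma \ref{lem:layer} every triangle of the layer $\mathcal{L}_1$ is monochromatic, so these tiles have red-count $0$ or $3$ in $T'_\mathcal{S}$ and their side colours are read off immediately; moreover the layer $\mathcal{L}_1$ itself is locally recognizable (its lines are the only ones carrying a collinear red--blue--red or blue--red--blue stretch of length $3$). What remains is each hexagon $H$ of $\mathcal{L}_1$: its boundary colours are already known and alternate, and Lemma \ref{lem:hexagon} applied at the centre of $H$ guarantees that at least one of the six interior triangles is monochromatic, hence recognizable from its red-count. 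That single ``seed'' segment, together with the known boundary colours and the red-counts of the six interior triangles, propagates around $H$ to determine all six central segments. This replaces your open-ended configuration check with a short structural argument and supplies the bounded radius for free.
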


\begin{proof}
The tiling $T'_\mathcal{S}$ can be reconstructed from $T_\mathcal{S}$ by removing decoration, and this is clearly a local operation. 

In order to reconstruct $T_\mathcal{S}$ from $T'_\mathcal{S}$ we suppose that we are given a tiling $T'$ that is constructed as $T'_\mathcal{S}$ for some (unknown) sequence $\mathcal{S}$ of elementary foldings, and our goal is to construct the corresponding unique tiling pattern $P_\mathcal{S}$ using local patches of $T'$. After that the tiling $T_\mathcal{S}$ can be constructed from the pattern $P_\mathcal{S}$ using the definition of a folding tiling.

All triangles of the layer $\mathcal{L}_1$ have sides of one color, so the coloring of their sides can be reconstructed from the tiling $T'$. Also the layer $\mathcal{L}_1$ can be identified as well. Indeed, if there is a segment of length $3$ colored in alternating pattern red-blue-red or blue-red-blue, then this segment belongs to $\mathcal{L}_1$ because all other layers must contain segments of length at least $2$ of a single color. Note, that since the length of the segment we use is bounded, this process is local. Thus, the only part left is to reconstruct the coloring inside the hexagons of $\mathcal{L}_1$.

Let $H$ be such a hexagon, then its sides are colored alternatively with red and blue and we can reconstruct their colors. For six triangles inside $H$ the tiling $T'$ gives us information about the number of red and blue sides for each. Therefore, if we will identify the color of at least one segment inside $H$, then all six will be recovered.

Now we refer to Figure \ref{pict:local}. We can see that from six segments at the center of $H$, there are three (actually four) consecutive segments of one color. Since the sides of $H$ have alternating colors, there will be a triangle inside $H$ with all sides of one color, and those sides can be recognized locally from the tiling $T'$ inside $H$ (provided we already identified the layer $\mathcal{L}_1$).

Thus we have described a local algorithm of finding $P_\mathcal{S}$ from $T'$.
\end{proof}

\begin{thm}\label{thm:mldpatterns}
If two sequences $\mathcal{S}$ and $\mathcal{R}$ of elementary foldings differ only in finitely many terms, then the corresponding patterns $P_\mathcal{S}$ and $P_\mathcal{R}$ belong to the same MLD equivalence class.
\end{thm}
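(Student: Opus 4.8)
The plan is to exploit the layered structure of the patterns established in Lemma \ref{lem:layer}: the coloring of layer $\mathcal{L}_k$ depends only on the single term $a_k$, and the layers are nested dilations of each other. If $\mathcal{S}=\{a_i\}$ and $\mathcal{R}=\{b_i\}$ differ only in finitely many terms, then there is an index $N$ with $a_i=b_i$ for all $i>N$. First I would observe that the patterns $P_\mathcal{S}$ and $P_\mathcal{R}$ agree completely on all layers $\mathcal{L}_k$ with $k>N$, since the coloring of such a layer is dictated entirely by $a_k=b_k$ (up to the global placement of $T_0$, which I address below). The two patterns can therefore differ only on the finitely-indexed layers $\mathcal{L}_1,\ldots,\mathcal{L}_N$; equivalently, on the set of all unit segments contained in the interior of the $(-2)^N$-dilation of $T_0$ and its translates, i.e. on a coarser triangular grid of mesh $2^N$.

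The key step is then a pigeonhole/finiteness argument. I would fix $r>0$ to be, say, $3\cdot 2^{N}$ (large enough that any $r$-ball determines which triangle of the layer decomposition $\mathcal{L}_1\cup\cdots\cup\mathcal{L}_N$ a given point sits in, using the identifying-segment idea from the proof of Theorem \ref{thm:mld} — a segment of length $3\cdot 2^{k-1}$ colored in the pattern red-blue-red or blue-red-blue pins down layer $\mathcal{L}_k$). Given an $r$-ball of $P_\mathcal{S}$ around a point $x$, the local combinatorial data on layers $\mathcal{L}_1,\ldots,\mathcal{L}_N$ near $x$ together with the knowledge that the remaining coloring is forced by the high layers lets one write down the corresponding $r$-ball of $P_\mathcal{R}$ near $x$ (after translating $x$ by the fixed offset relating $P_\mathcal{S}$ to $P_\mathcal{R}$). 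More precisely, I would show that $P_\mathcal{R}$ is locally derivable from $P_\mathcal{S}$ with some finite radius $R$: the colors on layers $\mathcal{L}_k$, $k>N$, are locally recognizable from $P_\mathcal{S}$ (they are the colors appearing on sufficiently long monochromatic strips), and the colors on the low layers transform according to a rule that depends only on the finite data $(a_1,\ldots,a_N)$ versus $(b_1,\ldots,b_N)$ and on which low-layer triangle one is looking at, which is itself locally detectable. By symmetry $P_\mathcal{S}$ is locally derivable from $P_\mathcal{R}$, giving the MLD equivalence.

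One technical point I would need to handle carefully is the role of the reference triangle $T_0$ and the choice of boundary. In Definition \ref{def:pattern} the pattern $P_\mathcal{S}$ is built with a fixed positive unit triangle $T_0$ as central triangle; changing $\mathcal{S}$ in finitely many terms may effectively shift which high-layer triangle plays the role of "center", so $P_\mathcal{R}$ as literally defined might be a translate of the pattern obtained by modifying $P_\mathcal{S}$ on the low layers. Since MLD equivalence is translation-invariant and we are free to compare $P_\mathcal{S}$ with a suitable translate of $P_\mathcal{R}$ (indeed the whole statement is about MLD classes, which by Definition \ref{def:hull} pass to hulls closed under translation), this causes no real difficulty; I would simply note that finitely many changes to $\mathcal{S}$ alter $P_\mathcal{S}$ only by a bounded modification on the mesh-$2^N$ subgrid possibly composed with a bounded translation.

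The main obstacle will be making the "local derivability" bookkeeping precise: namely, verifying that an observer seeing a bounded patch of $P_\mathcal{S}$ can (i) correctly identify, for each unit segment in the patch, which layer $\mathcal{L}_k$ it belongs to — this requires $k\le$ some bound, but segments of very high layers inside a bounded patch look like long monochromatic strips and one must argue their color is still determined, which it is because Lemma \ref{lem:hexagon} constrains how colors fit together locally — and (ii) apply the correct recoloring on the layers $k\le N$. For (i) I would lean on the layer-identification argument already used in Theorem \ref{thm:mld} and on the fact that within an $r$-ball only layers $\mathcal{L}_1,\ldots,\mathcal{L}_{k_0}$ with $2^{k_0-1}<r$ contribute genuine combinatorial variety, while the contribution of higher layers to the patch is a union of parallel monochromatic strips whose color is pinned down by adjacency to lower-layer segments via Lemma \ref{lem:hexagon}. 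Once this is set up, the recoloring rule for layers $1,\ldots,N$ is an explicit finite rule read off from Lemma \ref{lem:layer} (flip red$\leftrightarrow$blue on layer $k$ exactly when $a_k\ne b_k$), and the proof concludes.
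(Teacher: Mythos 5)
Your proposal is correct and follows essentially the same route as the paper: identify the layer $\mathcal{L}_k$ of each unit segment locally (via the length of its maximal monochromatic extension, bounded by a radius depending on $N$), flip the color exactly when $a_k\neq b_k$ for $k\le N$, and note that layers with index above $N$ are already identical by Lemma \ref{lem:layer}. Your extra care about a possible translation offset is harmless but unnecessary, since both patterns are built on the same grid with the same reference triangle $T_0$ and hence share the same layer decomposition.
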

\begin{proof}
Suppose that $\mathcal{S}=\{a_i\}_{i=1}^\infty$ and $\mathcal{R}=\{b_i\}_{i=1}^\infty$ and $a_i=b_i$ for $i>n$ for some natural $n$. We reconstruct the color of a fixed segment $X$ in $P_\mathcal{R}$ using its $2^n+1$ (open) neighborhood in $P_\mathcal{S}$.

This neighborhood contains the extension of $T$ by $2^n$ in both directions. If the longest extension colored with single color has length $2^k$ with $k<n$, then $T$ belongs to $\mathcal{L}_{k+1}$. In that case if $a_{k+1}=b_{k+1}$, then we keep the color of $T$, because the layer $\mathcal{L}_{k+1}$ is colored in the same way in $P_\mathcal{S}$ and in $P_\mathcal{R}$ according to Lemma \ref{lem:layer}. If $a_{k+1}\neq b_{k+1}$, then we change the color of $T$ by the same reason.

In case the longest extension of $T$ of a single color has length $2^n$ or greater, then we keep the color of $T$ as in that case $T$ belongs to $\mathcal{L}_{k+1}$ with $k+1>n$ and $a_{k+1}=b_{k+1}$.
\end{proof}

\section{Sequence of all foldings up}\label{sec:allup}

In this section we study the tiling $T_\mathcal{S}$ and the pattern $P_\mathcal{S}$ for the sequence $\mathcal{S}=\{a_i\}_{i=1}^\infty$ where  $a_i=+$ for every $i$. We show that the tiling $T_\mathcal{S}$ is a substitution tiling and the pattern $P_\mathcal{S}$ is a substitution pattern . After that we use the Perron-Frobenius theory for substitution tilings to find densities of all types of tiles in $T_\mathcal{S}$ and all types of unit triangle colorings in $P_\mathcal S$.

Informally, for a given set of polytopes $P_1,\ldots,P_n$ in $\R^d$ and a real number $\lambda>1$, substitution rules are given as dissections of inflated copies $\lambda P_i$ into (translational, congruent) copies of $P_1,\ldots,P_n$. If we repeat the process of inflation and dissection further, then under certain conditions we will be able to construct a tiling of $\R^d$ which is invariant under further inflation and dissection steps. 

We refer to \cite[Ch. 6]{BG} and \cite{FGH} for the exact definition(s) of the substitution tiling/pattern, and to \cite[Sect 2.4]{BG} for more details on Perron-Frobenius theory.

\begin{defin}
We define the ``pattern'' substitution rule $F_P$ and the ``tiling'' substitution rule $F_T$ using  Figure \ref{pict:sub}.

\begin{figure}[!ht]
\begin{center} 
\begin{tabular}{llll}
\includegraphics[scale=0.5]{basic-111.pdf}$\longrightarrow$\includegraphics[scale=0.5]{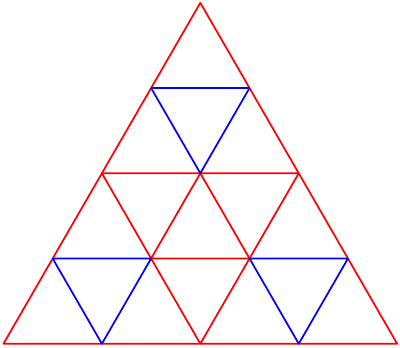}&
\includegraphics[scale=0.5]{basic-112.pdf}$\longrightarrow$\includegraphics[scale=0.5]{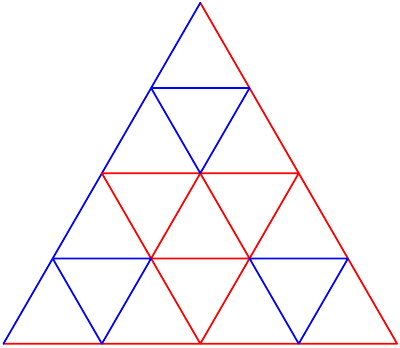}&
\includegraphics[scale=0.5]{basic-122.pdf}$\longrightarrow$\includegraphics[scale=0.5]{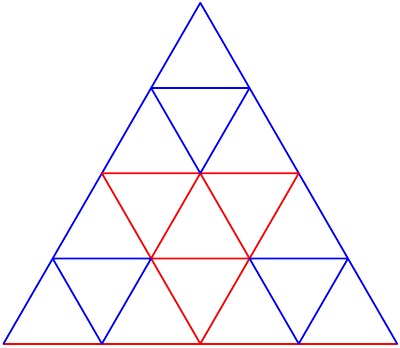}&
\includegraphics[scale=0.5]{basic-222.pdf}$\longrightarrow$\includegraphics[scale=0.5]{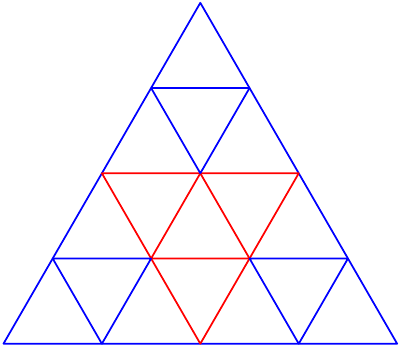}
\\[0.2cm]
\includegraphics[scale=0.5]{basic-444.pdf}$\longrightarrow$\includegraphics[scale=0.5]{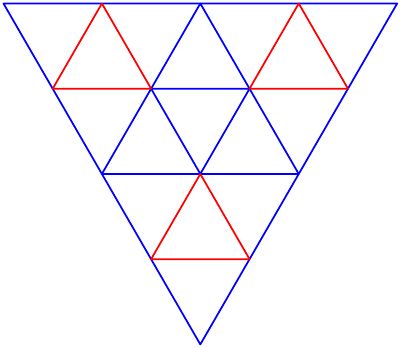}&
\includegraphics[scale=0.5]{basic-344.pdf}$\longrightarrow$\includegraphics[scale=0.5]{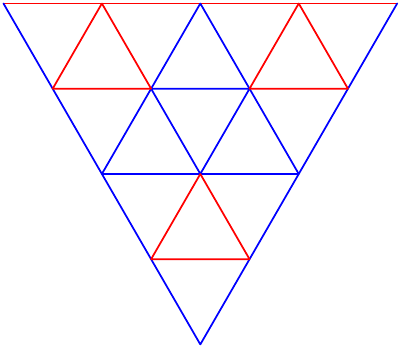}&
\includegraphics[scale=0.5]{basic-334.pdf}$\longrightarrow$\includegraphics[scale=0.5]{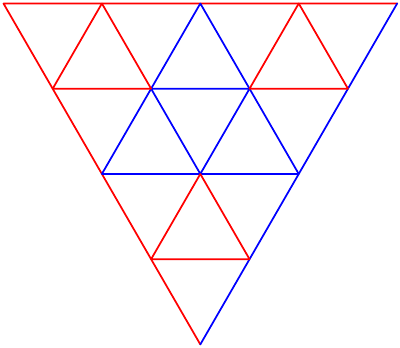}&
\includegraphics[scale=0.5]{basic-333.pdf}$\longrightarrow$\includegraphics[scale=0.5]{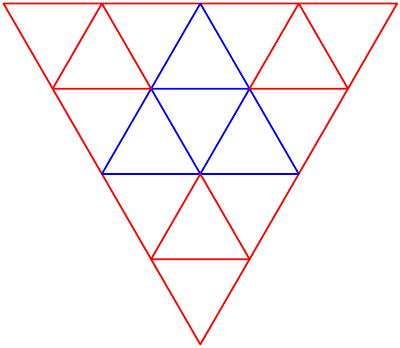}
\\[0.4cm]
\includegraphics[scale=0.5]{basic-555.pdf}$\longrightarrow$\includegraphics[scale=0.5]{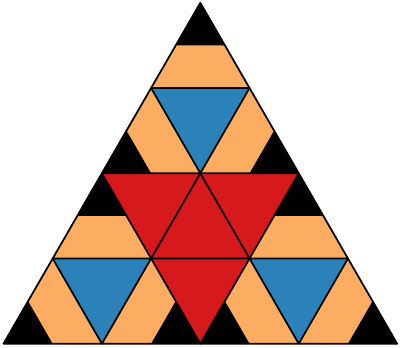}&
\includegraphics[scale=0.5]{basic-556.pdf}$\longrightarrow$\includegraphics[scale=0.5]{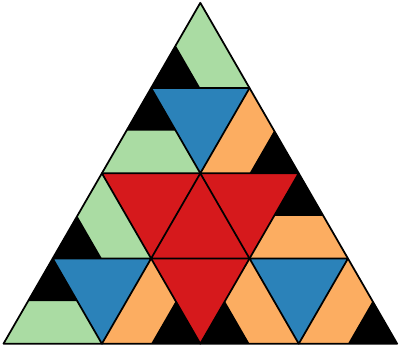}&
\includegraphics[scale=0.5]{basic-566.pdf}$\longrightarrow$\includegraphics[scale=0.5]{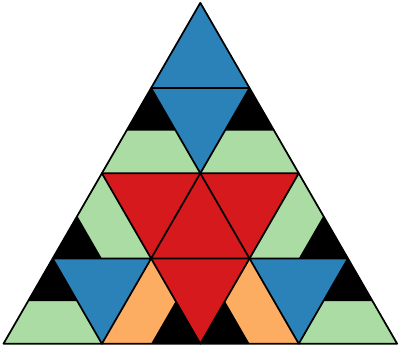}&
\includegraphics[scale=0.5]{basic-666.pdf}$\longrightarrow$\includegraphics[scale=0.5]{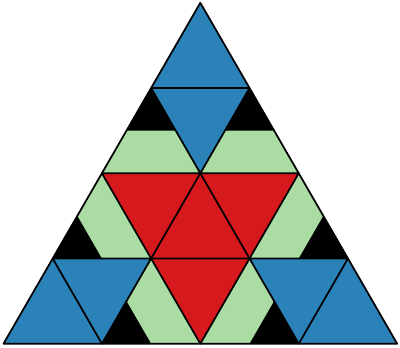}
\\[0.2cm]
\includegraphics[scale=0.5]{basic-888.pdf}$\longrightarrow$\includegraphics[scale=0.5]{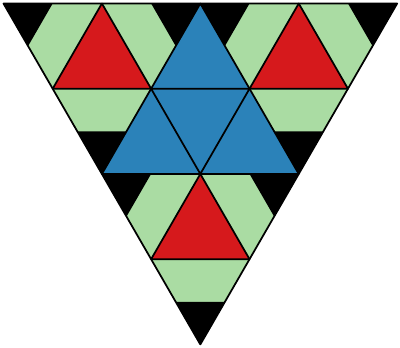}&
\includegraphics[scale=0.5]{basic-788.pdf}$\longrightarrow$\includegraphics[scale=0.5]{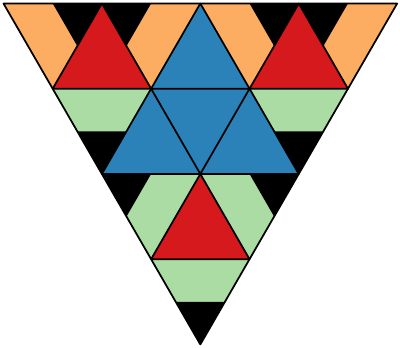}&
\includegraphics[scale=0.5]{basic-778.pdf}$\longrightarrow$\includegraphics[scale=0.5]{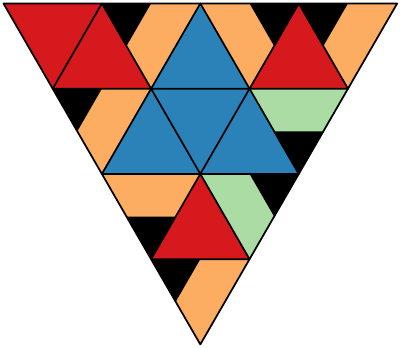}&
\includegraphics[scale=0.5]{basic-777.pdf}$\longrightarrow$\includegraphics[scale=0.5]{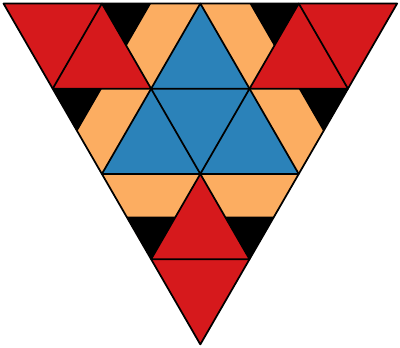}
\end{tabular}
\caption{Substitution rules $F_P$ and $F_T$.}
\label{pict:sub}
\end{center}
\end{figure}
\end{defin}

Note, that these rules are ``derivable'' from each other in a sense, that triangles with side length 4 of $F_T$ can be reconstructed from triangles with side length 4 of $F_P$ using the same approach we use in the proof of Theorem \ref{thm:mld}. We can also use triangles with side length 4 of $F_T$ to reconstruct all segments in triangles with side length 4 of $F_P$ except 6 segments incident to the vertices. These segments incident to vertices can be reconstructed as well because each side of each triangle of side length 4 in $F_P$ is either completely red or completely blue.

\begin{thm}\label{thm:sub}
The substitution rule $F_P$ with a (legal) seed \includegraphics[scale=0.4]{basic-111.pdf} centered at the origin generates the pattern $P_\mathcal{S}$ for the sequence $\mathcal{S}$ of all elementary foldings up.

The substitution rule $F_T$ with a (legal) seed \includegraphics[scale=0.4]{basic-555.pdf} centered at the origin generates the (decorated) tiling $T_\mathcal{S}$ for the sequence $\mathcal{S}$ of all elementary foldings up.
\end{thm}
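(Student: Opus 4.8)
The plan is to show that the pattern $P_\mathcal{S}$ for the all-up sequence is a fixed point of the substitution rule $F_P$ by exhibiting a scaling self-similarity of $P_\mathcal{S}$ compatible with the dissection rules in Figure \ref{pict:sub}, and then to derive the statement about $T_\mathcal{S}$ from it using the MLD equivalence of Theorem \ref{thm:mld}. The key geometric fact I would rely on is the layer decomposition $\mathcal{L}=\bigsqcup_k\mathcal{L}_k$ together with the identity $\mathcal{L}_{k+1}=(-2)\mathcal{L}_k$ (dilation about $O$) established before Lemma \ref{lem:layer}: applying a $(-2)$-dilation about $O$ carries layer $\mathcal{L}_k$ to layer $\mathcal{L}_{k+1}$, hence carries the whole grid onto $\bigsqcup_{k\ge 2}\mathcal{L}_k = (-2)\mathcal{L}$. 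The crucial point is that, because the sequence $\mathcal{S}$ is \emph{constant} ($a_i=+$ for all $i$), the coloring rule of Lemma \ref{lem:layer} for layer $\mathcal{L}_{k+1}$ in $P_\mathcal{S}$ is exactly the coloring rule for layer $\mathcal{L}_k$ in $P_\mathcal{S}$ after the orientation flip induced by the $(-2)$-factor (odd/even parity of $k$ shifts by one, and the parity shift is exactly compensated by the parity shift in the statement of Lemma \ref{lem:layer}). So the map $\mathbf{x}\mapsto(-2)\mathbf{x}$ about $O$ sends the coloring of $\mathcal{L}_k$ in $P_\mathcal{S}$ to the coloring of $\mathcal{L}_{k+1}$ in $P_\mathcal{S}$. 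This gives a self-similarity: $P_\mathcal{S}$, restricted to $(-2)\mathcal{L}$, is the $(-2)$-dilate of all of $P_\mathcal{S}$.

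Concretely, I would proceed as follows. \textbf{Step 1.} Fix a unit positive triangle $T_0$ centered at $O$ and consider the triangle $-2\,T_0$ (negative, side $2$), then $4\,T_0$ (positive, side $4$), etc. Show, using Lemma \ref{lem:layer} applied to each layer together with the dilation identity above, that the coloring of $P_\mathcal{S}$ inside the side-$4$ positive triangle $4\,T_0$ is precisely what one obtains by taking the pattern $P_\mathcal{S}$ inside $T_0$ and its neighbors and applying the dissection prescribed by $F_P$ to the appropriate seed tile; this is a finite check comparing the colors of the $\le 18$ segments inside $4\,T_0$ predicted by the two descriptions. \textbf{Step 2.} Verify that $F_P$ is primitive (each of the eight tile types eventually appears in the substitution of every type — visible from Figure \ref{pict:sub}) and that the chosen seed \includegraphics[scale=0.4]{basic-111.pdf} is legal, so that iterating $F_P$ from the seed produces a well-defined pattern of the whole plane; here I would invoke the standard existence theory for primitive substitutions, \cite[Ch. 6]{BG}. \textbf{Step 3.} Conclude by induction on $n$ that the $n$-fold substitution $F_P^n$ applied to the seed reproduces $P_{\mathcal{S}_{n}}$ (the pattern of a side-$2^{n}$ triangle, with the correct orientation since $4=(-2)^2$ preserves orientation), and hence in the limit reproduces $P_\mathcal{S}$. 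Because the hull/pattern generated by a primitive substitution is unique, and because by Step 1 $P_\mathcal{S}$ satisfies the same inflation/dissection relation, the two coincide.

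\textbf{Step 4 (the tiling).} For $F_T$ with seed \includegraphics[scale=0.4]{basic-555.pdf} I would not repeat the argument but instead transfer it through MLD. By the remark after the definition of $F_P,F_T$, the side-$4$ triangles of $F_T$ and of $F_P$ are mutually locally derivable exactly as in Theorem \ref{thm:mld}; since $T_\mathcal{S}$ and $P_\mathcal{S}$ are MLD (Theorem \ref{thm:mld} together with the construction of $T_\mathcal{S}$ from $P_\mathcal{S}$ via Figure \ref{pict:colored_tr}), and the substitution operations on the two sides commute with this local derivation, the fixed point of $F_T$ with the stated seed is the image under the local map of the fixed point of $F_P$, namely $T_\mathcal{S}$.

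\textbf{Main obstacle.} The genuinely delicate step is Step 1: one must check that the concrete dissection pictures in Figure \ref{pict:sub} agree with the layer-theoretic prediction \emph{including the decoration} (the marked vertex on triangles with one odd-colored side), and in particular that the orientation bookkeeping — a side-$1$ positive seed inflates by $\lambda$ with $\lambda^2 = 4$ but the underlying geometric map is $\mathbf{x}\mapsto -2\mathbf{x}$, reversing orientation at each single step — is consistent with the fact that $F_P$ as drawn maps positive tiles to positive tiles. I expect this to come down to observing that the substitution $F_P$ is really the \emph{square} of the orientation-reversing inflation, so that two elementary foldings (both ``$+$'') are packaged into one substitution step; once this is said cleanly, the remaining verifications are the finite combinatorial checks against Lemma \ref{lem:hexagon} for the colorings inside hexagons of $\mathcal{L}_1$.
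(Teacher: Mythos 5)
Your proposal is correct and follows essentially the same route as the paper's proof: an induction showing that the iterates of $F_P$ on the seed reproduce $P_\mathcal{S}$ on growing central triangles, with the key step being that dilation about $O$ respects the layer coloring of Lemma \ref{lem:layer} (the paper uses the $4$-dilation carrying $\mathcal{L}_i$ to $\mathcal{L}_{i+2}$ directly, rather than your single $(-2)$-step with parity compensation), followed by the finite check of the segments inside each side-$4$ triangle (three $\mathcal{L}_1$-triangles and one $\mathcal{L}_2$-triangle) and a transfer to $F_T$ by local derivability. The only blemish is a bookkeeping slip in your Step 3 — the $n$-th iterate of $F_P$ yields $P_{\mathcal{S}_{2n}}$ (a side-$4^n$ triangle), not $P_{\mathcal{S}_n}$ — which you effectively correct yourself by observing that $F_P$ packages two elementary foldings per substitution step.
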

\begin{proof}
We prove the statement for the pattern substitution rule $F_P$. Then the statement for the tiling substitution rule will follow immediately from the remark before the current theorem.

The rule $F_P$ preserves the coloring on the external sides of a triangle, therefore after applying the rule $F_P$ to the positive triangle of side length $1$ with red sides centered at $O$ $k$ times we get a pattern inside a positive triangle $T_k$ of side length $4^k$ centered at $O$ with red sides. We will prove by induction that the pattern inside $T_k$ obtained via $F_P$ coincides with the pattern $P_\mathcal{S}$ inside the triangle of the same size centered at $O$.

The basis of the induction is evident. For the step $k\longrightarrow k+1$ of the induction we notice that the $4$-dilation of the pattern in $T_k$ with respect to $O$ gives the coloring of the subdivision of $T_{k+1}$ into triangles of side length $4$. Each of these triangles of side length $4$ is constructed from some unit triangle of $T_k$ using the rule $F_P$. 

The coloring of these triangles with side length $4$ coincides with the coloring of the corresponding segments in $P_\mathcal{S}$ because $4$-dilation of a positive (negative) triangle of $\mathcal{L}_i$ with respect to $O$ gives a positive (negative) triangle of $\mathcal{L}_{i+2}$ and they are colored identically due to Lemma \ref{lem:layer}. 

The segments inside each triangle of side length $4$ are formed by three triangles of the layer $\mathcal{L}_1$ and one triangle of the layer $\mathcal{L}_2$, see Figure \ref{pict:tr4}. The coloring of these triangles is determined by the coloring of the internal segments from the rule $F_P$ and this coloring coincides with the coloring of the positive/negative triangles in the corresponding layers from Lemma \ref{lem:layer}.

\begin{figure}[!ht]
\begin{center} 
\includegraphics[scale=0.7]{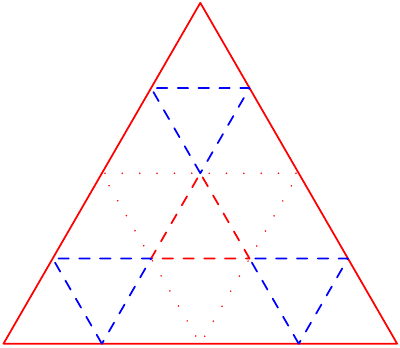}
\hskip 1cm
\includegraphics[scale=0.7]{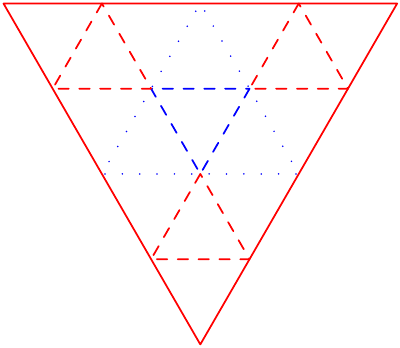}
\caption{The triangles from the layer $\mathcal{L}_1$ (dashed) and from the layer $\mathcal{L}_2$ (dotted).}
\label{pict:tr4}
\end{center}
\end{figure}
\end{proof}

As a consequence we can use Perron-Frobenius theory for substitution tilings to compute densities of all triangles in the tiling $T_\mathcal{S}$ and in the pattern $P_\mathcal{S}$.

\begin{exam}
The rule $F_P$ uses 8 different types of triangles, so the corresponding substitution matrix $M_{P}$ is an $8\times 8$ integer matrix. If we use the same order of triangles as in $F_P$ (see Figure \ref{pict:sub}), then

$$
M_{P}=
\left(
\begin{array}{rrrrrrrr}
1&1&1&1&3&3&3&3\\
9&5&2&0&0&0&0&0\\
0&4&6&6&0&0&0&0\\
0&0&1&3&3&3&3&3\\
3&3&3&3&1&1&1&1\\
0&0&0&0&9&5&2&0\\
0&0&0&0&0&4&6&6\\
3&3&3&3&0&0&1&3
\end{array}
\right)
$$
where the first 4 columns correspond to positive triangles, and the last 4 to negative. Since 
$$M_{P}^2=
\left(
\begin{array}{rrrrrrrr}
 28 & 28 & 28 & 28 & 36 & 36 & 36 & 36 \\
 54 & 42 & 31 & 21 & 27 & 27 & 27 & 27 \\
 36 & 44 & 50 & 54 & 18 & 18 & 18 & 18 \\
 18 & 22 & 27 & 33 & 39 & 39 & 39 & 39 \\
 36 & 36 & 36 & 36 & 28 & 28 & 28 & 28 \\
 27 & 27 & 27 & 27 & 54 & 42 & 31 & 21 \\
 18 & 18 & 18 & 18 & 36 & 44 & 50 & 54 \\
 39 & 39 & 39 & 39 & 18 & 22 & 27 & 33 \\
\end{array}
\right)$$
is a positive matrix, we can see that the substitution is primitive. The Perron-Frobenius eigenvalue of $M_{P}$ is $\lambda_{PF}=16$ and all other eigenvalues are $4,4,4,1,1,0,0$. 

The left and right $\lambda_{PF}$-eigenvectors of $M_{P}$ are $(1,1,1,1,1,1,1,1)$ and $(1,1,1,1,1,1,1,1)^t$ respectively. The left $PF$-eigenvector of $M_{P}$ shows that areas of all prototiles (eight triangles) can be chosen to be equal which we already know because all prototiles are equal regular triangles of different colors. The right $PF$-eigenvector shows that densities of all types of triangles are equal (to $\frac18$) in the tiling $T_\mathcal{S}$. In this case we say that two triangles are of the same type if they are translations of each other or rotations by a multiple of $\frac{2\pi}{3}$. 

Since the tiling $T_\mathcal{S}$ has a three-fold rotational symmetry with respect to the origin, then the densities of translationally different tiles in $T_\mathcal{S}$ can be found as well. Particularly, all four types of triangles without decoration have density $\frac18$, and all twelve types of triangles with decoration have density $\frac{1}{24}$.
\end{exam}

\section{Periodic sequences of elementary foldings}\label{sec:periodic}

The main goal of this section is to prove that if a sequence $\mathcal{S}$ of elementary foldings is periodic, then the corresponding pattern $P_\mathcal{S}$ and tiling $T_\mathcal{S}$ can be generated via substitution rules. In order to show that, we first introduce two auxiliary substitutions that will help us to describe substitution rules for periodic $\mathcal{S}$ as well as establish common spectral properties of the corresponding substitutions.

The substitution rules for the all-up sequence given in Figure \ref{pict:sub} can be viewed as the square of the substitution rules $F_+$ defined in Figure \ref{pict:f+}.

\begin{figure}[!ht]
\begin{center} 
\begin{tabular}{llll}
\includegraphics[scale=0.5]{basic-111.pdf}$\longrightarrow$\includegraphics[scale=0.5]{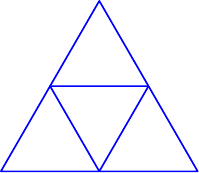}&
\includegraphics[scale=0.5]{basic-112.pdf}$\longrightarrow$\includegraphics[scale=0.5]{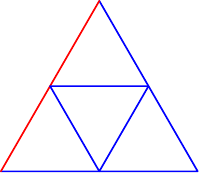}&
\includegraphics[scale=0.5]{basic-122.pdf}$\longrightarrow$\includegraphics[scale=0.5]{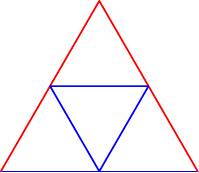}&
\includegraphics[scale=0.5]{basic-222.pdf}$\longrightarrow$\includegraphics[scale=0.5]{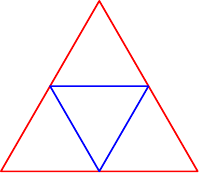}
\\[0.2cm]
\includegraphics[scale=0.5]{basic-444.pdf}$\longrightarrow$\includegraphics[scale=0.5]{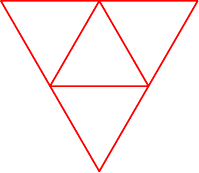}&
\includegraphics[scale=0.5]{basic-344.pdf}$\longrightarrow$\includegraphics[scale=0.5]{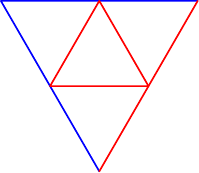}&
\includegraphics[scale=0.5]{basic-334.pdf}$\longrightarrow$\includegraphics[scale=0.5]{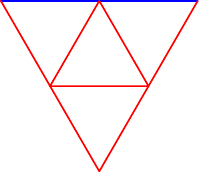}&
\includegraphics[scale=0.5]{basic-333.pdf}$\longrightarrow$\includegraphics[scale=0.5]{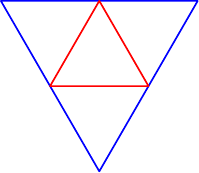}
\end{tabular}
\caption{Auxiliary substitution rules $F_+$.}
\label{pict:f+}
\end{center}
\end{figure}

Indeed, if we apply these rules to a positive or negative triangle twice, then we get exactly the substitution rules $F_P$, see Figure \ref{pict:f+squared} for illustration of this property for a positive triangle and for a negative triangle.

\begin{figure}[!ht]
\begin{center}
\begin{tabular}{l}
\includegraphics[scale=0.5]{basic-112.pdf}$\longrightarrow$\includegraphics[scale=0.5]{f+-112.pdf}
$\longrightarrow$\includegraphics[scale=0.5]{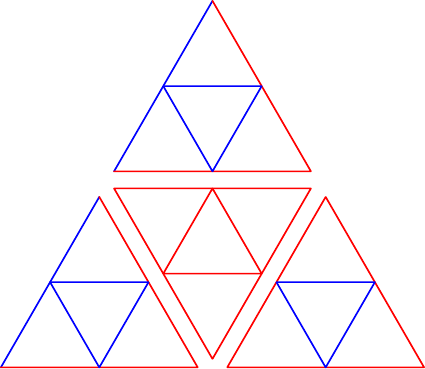}$=$\includegraphics[scale=0.5]{rules-112.pdf}
\\[0.2cm]
\includegraphics[scale=0.5]{basic-444.pdf}$\longrightarrow$\includegraphics[scale=0.5]{f+-444.pdf}
$\longrightarrow$\includegraphics[scale=0.5]{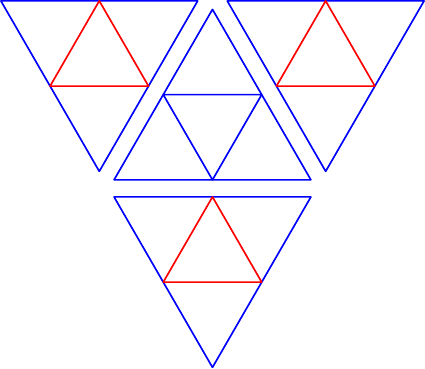}$=$\includegraphics[scale=0.5]{rules-444.pdf}
\end{tabular}
\caption{Substitution rules $F_P$ as $F_+^2$.}
\label{pict:f+squared}
\end{center}
\end{figure}

We also introduce additional auxiliary substitution rules $F_-$ (see Figure \ref{pict:f-}) that differ from $F_+$ only by the coloring of central triangles.

\begin{figure}[!ht]
\begin{center} 
\begin{tabular}{llll}
\includegraphics[scale=0.5]{basic-111.pdf}$\longrightarrow$\includegraphics[scale=0.5]{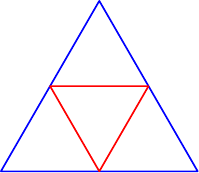}&
\includegraphics[scale=0.5]{basic-112.pdf}$\longrightarrow$\includegraphics[scale=0.5]{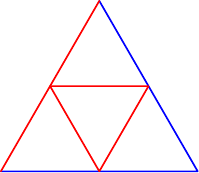}&
\includegraphics[scale=0.5]{basic-122.pdf}$\longrightarrow$\includegraphics[scale=0.5]{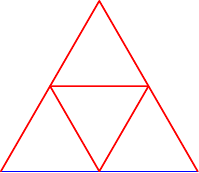}&
\includegraphics[scale=0.5]{basic-222.pdf}$\longrightarrow$\includegraphics[scale=0.5]{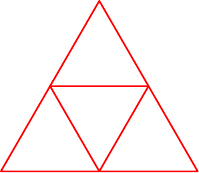}
\\[0.2cm]
\includegraphics[scale=0.5]{basic-444.pdf}$\longrightarrow$\includegraphics[scale=0.5]{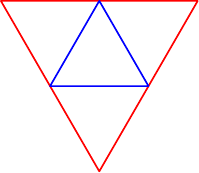}&
\includegraphics[scale=0.5]{basic-344.pdf}$\longrightarrow$\includegraphics[scale=0.5]{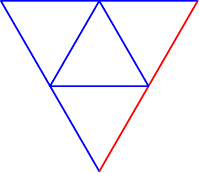}&
\includegraphics[scale=0.5]{basic-334.pdf}$\longrightarrow$\includegraphics[scale=0.5]{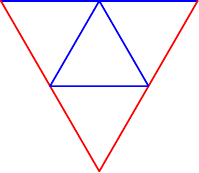}&
\includegraphics[scale=0.5]{basic-333.pdf}$\longrightarrow$\includegraphics[scale=0.5]{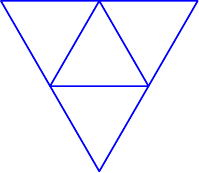}
\end{tabular}
\caption{Auxiliary substitution rules $F_-$.}
\label{pict:f-}
\end{center}
\end{figure}

\begin{lem}\label{lem:pattern=+-}
Let $\mathcal F=\{a_i\}_{i=1}^k$ be a finite sequence of elementary foldings. 

If $k$ is even, then the finite folding pattern $P_\mathcal F$ can be obtained as $$P_\mathcal F=F_{a_1}\circ F_{a_2}\circ \ldots \circ F_{a_k}(\text{\includegraphics[scale=0.4]{basic-111.pdf}})$$
for a certain choice of the boundary coloring.

Similarly, if $k$ is odd, then the finite folding pattern $P_\mathcal F$ can be obtained as $$P_\mathcal F=F_{a_1}\circ F_{a_2}\circ \ldots \circ F_{a_k}(\text{\includegraphics[scale=0.4]{basic-333.pdf}})$$
for a certain choice of the boundary coloring.
\end{lem}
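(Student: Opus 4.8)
The plan is to prove the statement by induction on $k$, in the spirit of the induction in the proof of Theorem \ref{thm:sub} (indeed the lemma, applied to the all‑$+$ sequence of even length, recovers that theorem). Because the composition $F_{a_1}\circ\cdots\circ F_{a_k}$ applies $F_{a_k}$ \emph{first} to the seed and that rule turns a unit triangle into four differently coloured unit triangles, the induction must be run in a stronger form: \emph{for any unit triangle $u$ of the orientation forced by the parity of $k$ (there are four such prototile colourings), the patch $F_{a_1}\circ\cdots\circ F_{a_k}(u)$ agrees with $P_\mathcal{F}$ on every grid segment lying in the interior of the resulting triangle of side $2^k$}; the segments on the boundary of that triangle are, by definition, the ``certain choice of boundary colouring'' allowed in the statement. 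The two seeds singled out in the lemma are then particular instances of this strengthened claim.

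For the base case $k=1$ one checks directly from Definition \ref{def:elementary} that a single unfolding of a side‑$2$ triangle creates, as its only interior segments, the three midsegments, coloured red if $a_1=+$ (valleys) and blue if $a_1=-$ (peaks); this is precisely what $F_+$ and $F_-$ do to a unit triangle according to Figures \ref{pict:f+} and \ref{pict:f-}. For the inductive step I would write $F_{a_1}\circ\cdots\circ F_{a_k}(u)=(F_{a_1}\circ\cdots\circ F_{a_{k-1}})\bigl(F_{a_k}(u)\bigr)$ and compare it with the single‑unfolding description recalled in Section \ref{sec:bano}: $P_{\{a_1,\dots,a_k\}}$ equals $P_{\{a_1,\dots,a_{k-1}\}}$ placed on the central $2^{k-1}$‑subtriangle, together with the three midsegments coloured according to $a_k$, together with the three reflected‑and‑colour‑swapped copies of $P_{\{a_1,\dots,a_{k-1}\}}$ on the three side parts. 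Now $F_{a_k}(u)$ is a side‑$2$ triangle made of one central unit whose three sides are coloured according to $a_k$ and three corner units; applying $F_{a_1}\circ\cdots\circ F_{a_{k-1}}$ to each of these four units gives, by the strengthened induction hypothesis, four copies of $P_{\{a_1,\dots,a_{k-1}\}}$ on the four $2^{k-1}$‑subtriangles. The central copy matches the central part of $P_\mathcal{F}$ outright, its boundary being the midsegments coloured by $a_k$. For the three corner copies one needs that a copy of $P_{\{a_1,\dots,a_{k-1}\}}$ placed on a triangle of the opposite orientation coincides, in its interior, with the reflected‑and‑colour‑swapped $P_{\{a_1,\dots,a_{k-1}\}}$; this is exactly where Lemma \ref{lem:layer} enters, since a reflection in a side of a $2^{k-1}$‑triangle sends each layer $\mathcal{L}_j$ with $j\le k-1$ to itself while interchanging its positive and negative triangles, and the resulting colour change is undone by the global colour swap, so the layerwise colouring prescribed by Lemma \ref{lem:layer} is preserved. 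Hence the four patches fit together into $P_{\{a_1,\dots,a_k\}}$, with the outermost boundary being whatever the substitution produced.

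The main obstacle is bookkeeping rather than a single conceptual hurdle: one must track how the orientation of the seed, of all intermediate triangles, and of the four corner units of $F_{a_k}(u)$ alternates with the parity of $k$; confirm, by reading Figures \ref{pict:f+} and \ref{pict:f-}, that $F_+$ and $F_-$ really do encode ``keep the interior of the central part, colour the new midsegments according to the sign, and reflect‑with‑colour‑swap into the side parts''; and verify the compatibility of reflection‑plus‑colour‑swap with the layer colouring. Once the strengthened induction hypothesis is adopted (arbitrary legal unit‑triangle seed, boundary left unspecified) and the ``reflection‑plus‑colour‑swap preserves a folding pattern in its interior'' fact is isolated, the remaining steps are routine verifications against Definition \ref{def:elementary} and Lemma \ref{lem:layer}.
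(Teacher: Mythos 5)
Your overall strategy---induct on $k$, peel off the innermost substitution $F_{a_k}$, and match the four resulting side-$2^{k-1}$ blocks against the central part and the three side parts in the unfolding description---is a reasonable alternative to the paper's argument, which does not induct on a spatial decomposition but instead tracks, for each $j$, the central triangles inserted by $F_{a_j}$ through the $j-1$ subsequent inflations and colour swaps and identifies them with the layer $\mathcal{L}_j$ coloured as in Lemma \ref{lem:layer}. However, as formulated your induction does not close. Each rule $F_{\pm}$ preserves the orientation of the prototile it is applied to (a positive seed of even $k$ must end positive and a negative seed of odd $k$ must end negative, which forces orientation preservation at every step), so $F_{a_k}(u)$ consists of one central unit of the orientation \emph{opposite} to $u$ and three corner units of the \emph{same} orientation as $u$. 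Since the orientation forced by the parity of $k-1$ is opposite to that forced by $k$, your strengthened hypothesis for $k-1$ applies only to the central unit; it says nothing about $F_{a_1}\circ\cdots\circ F_{a_{k-1}}$ applied to the three corner units, which carry exactly the wrong orientation. So the assertion that all four blocks carry ``copies of $P_{\{a_1,\ldots,a_{k-1}\}}$'' is unsupported for three of them.

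No symmetry rescues this: $F_{+}$ inserts a \emph{blue} central triangle into a positive prototile but a \emph{red} one into a negative prototile, so the patches grown from seeds of opposite orientations are genuinely different patterns (already for one substitution the interior is a blue unit triangle in one case and a red one in the other), not point-rotations or reflections of each other. Hence your sentence about ``a copy of $P_{\{a_1,\ldots,a_{k-1}\}}$ placed on a triangle of the opposite orientation'' coinciding with the reflected-and-colour-swapped copy is precisely the missing content, not a consequence of the hypothesis. The fix is to strengthen the hypothesis to seeds of \emph{both} orientations, asserting that in either case the interior of $F_{a_1}\circ\cdots\circ F_{a_j}(u)$ obeys the layerwise colouring rule of Lemma \ref{lem:layer} (equivalently, that a wrong-orientation seed produces the reflected-and-colour-swapped copy). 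The inductive step then reduces to checking that the central triangles inserted by $F_{a_j}$ into \emph{every} prototile, of either orientation, follow the uniform rule ``negative blue, positive red'' for $+$ (and its swap for $-$), and that $j-1$ further inflations and colour swaps turn them into the layer $\mathcal{L}_j$ coloured as $a_j$ prescribes---at which point you have essentially reproduced the paper's proof. The other ingredients of your write-up (base case, the central block, the compatibility of reflection-plus-colour-swap with the layer colouring) are correct.
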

\begin{proof}
First of all we notice that the composition described in the lemma leads to a triangle of side length $2^k$ which is positive if $k$ is even and negative if $k$ is odd, so the size and orientation of the resulting pattern coincides with the size and orientation of $P_\mathcal F$. In the rest of the proof we show that the colorings of the layers $\mathcal L_j$, $1\leq j\leq k-1$ in the composition and in $P_\mathcal F$ coincide too. Note, that we do not need equal coloring of the layer $\mathcal L_k$ as this layer is exactly on the boundary of $P_\mathcal F$.

Suppose $k$ is even. After we construct the pattern $\mathcal P_1:=F_{a_2}\circ \ldots \circ F_{a_k}(\text{\includegraphics[scale=0.4]{basic-111.pdf}})$, we get a pattern inside a positive triangle with side length $2^{k-1}$. According to the rules $F_+$ and $F_-$, in order to construct $P_\mathcal F=F_{a_1}(\mathcal P_1)$ we need to inflate the pattern $\mathcal P_1$ twice, swap the colors in the inflated pattern, and then add either red or blue triangles inside every resulting triangle of side length $2$. We add negative blue triangles and positive red triangles if $a_1=+$ and negative red triangles and positive blue triangles if $a_1=-$. 

Let us consider the pattern $-\mathcal P_1$. It represents a coloring of the layers $\mathcal L_j$, $1\leq j\leq k-1$ inside a negative triangle of side length $2^{k-1}$. In order to get the pattern $2\mathcal P_1$ we apply $(-2)$-homothety, so we get some coloring of the layers $\mathcal L_j$, $2\leq j\leq k$ (that needs to be swapped) and the additional red and blue triangles are exactly triangles from the missing layer $\mathcal L_1$. Note, that the coloring we get coincides with the one described in Lemma \ref{lem:layer}.

In a similar way, let $\mathcal P_2=F_{a_3}\circ \ldots \circ F_{a_k}(\text{\includegraphics[scale=0.4]{basic-111.pdf}})$. This is a pattern inside positive triangle of side length $2^{k-2}$. If we inflate this pattern with a factor of $4$ we get a coloring of all layers inside a positive triangle of side length $2^k$ except $\mathcal L_1$ and $\mathcal L_2$; note that this inflation does not necessarily give a proper coloring of these layers immediately. The layer $\mathcal L_1$ is colored during the last substitution $F_{a_1}$, so the coloring of $\mathcal L_2$ is defined by the substitution $F_{a_2}$ applied to $\mathcal P_2$ and then adjusted during the last substitution $F_{a_1}$.

In order to get the pattern $F_{a_2}(\mathcal P_2)$ we stretch $\mathcal P_2$ with the factor of $2$ and fill the resulting triangles of side length $2$ with unit red or blue triangles. After that, when we apply $F_{a_1}$, these unit triangles are inflated to triangles with side length $2$ of $\mathcal L_2$ and we swap the color of their sides according to substitution $F_+$ or $F_-$. So, if $a_2=+$, then initial positive triangles are colored with red and initial negative triangles are colored with blue. After we inflate and swap colors, the positive triangles of $\mathcal L_2$ are colored with blue and the negative triangles of $\mathcal L_2$ are colored with red. Similarly, if $a_2=-$, then the positive triangles of $\mathcal L_2$ are colored with red and the negative triangles of $\mathcal L_2$ are colored with blue. This coloring coincides with the coloring of $\mathcal L_2$ defined by $a_2$ according to Lemma \ref{lem:layer}.

In the same way, let  $\mathcal P_j:=F_{a_{j+1}}\circ \ldots \circ F_{a_k}(\text{\includegraphics[scale=0.4]{basic-111.pdf}})$. The coloring of the layer $\mathcal L_j$ in $P_\mathcal F$ is defined only by the substitution $F_{a_j}$ applied to $\mathcal P_j$ as central triangles introduced during this step form the layer $\mathcal L_1$ of the pattern $\mathcal P_{j-1}$; these triangles are inflated to triangles of the layer $\mathcal L_2$ of the pattern $\mathcal P_{j-2}$ by $F_{a_{j-1}}$, and so on. After the process of $j-1$ inflations applied to $\mathcal P_{j-1}=F_{a_j}(\mathcal P_j)$ we get triangles of $\mathcal L_{j}$ with side length $2^{j-1}$ and swap colors of their sides $j-1$ times which leads to a coloring of $\mathcal L_j$. This coloring of $\mathcal L_j$ coincides with the one of $P_\mathcal F$ due to Lemma \ref{lem:layer}.

The case of odd $k$ is similar.
\end{proof}

We also note that we can use any positive unit triangle in case of even $k$ and any negative unit triangle in case of odd $k$ as the resulting composition of substitution rules gives patterns that differ only in the coloring of the boundary.

The construction described in the previous lemma fits the approach to construct tilings using mixed substitutions. In that approach, several different substitution rules can be applied to the whole tiling, or to separate tiles, or to collections of tiles on each inflation step. Particularly, as described in Lemma \ref{lem:pattern=+-}, we can obtain finite folding patterns using substitution rules $F_+$ and $F_-$ and applying one of these rules to the existing pattern on each step based on the sequence of the elementary foldings. We refer to papers \cite{FS, GM} for a rigorous description of mixed substitution rules.

Particularly, the description from Lemma \ref{lem:pattern=+-} satisfies construction from \cite[Def. 1.2]{GM} and allows us to use all machinery of mixed substitutions immediately. If we start from a finite sequence $\mathcal F=\{a_i\}_{i=1}^k$ and create a new sequence  $\mathcal F'=\{a_i\}_{i=1}^{k+1}$ but with an additional elementary folding $a_{k+1}$, then the resulting folding patterns coincide on the common part. Similarly, in order to make the corresponding change in the composition of substitution rules from $F_{a_1}\circ F_{a_2}\circ \ldots \circ F_{a_k}$ to $F_{a_1}\circ F_{a_2}\circ \ldots \circ F_{a_k}\circ F_{a_{k+1}}$ we need to insert the new rule $F_{a_{k+1}}$ as the initial substitution as defined in \cite[Def 1.2]{GM}. We also refer to \cite{Fre} for ``unconventional'' approach to mixed substitutions where each new substitution is applied to the result of the previous substitutions rather than as the initial step.

In addition to mixed substitutions approach, the description from Lemma \ref{lem:pattern=+-} gives a way to show existence of substitution rules for the set of folding patterns generated by periodic sequences as well as study their spectral properties. While we will not use properties of mixed substitutions here, we later use them in Section \ref{sec:density} to show that densities for different types of triangles are equal in $P_\mathcal S$ for every sequence $\mathcal S$.

\begin{thm}\label{thm:periodic}
If a sequence $\mathcal S=\{a_i\}_{i=1}^\infty$ is periodic with even period $\{a_1,\ldots,a_k\}$ then the pattern $P_{\mathcal S}$ can be generated using substitution rules $F_{a_1}\circ F_{a_2}\circ \ldots \circ F_{a_k}$ applied either to \includegraphics[scale=0.4]{basic-111.pdf} if $a_1=+$ or to \includegraphics[scale=0.4]{basic-222.pdf} if $a_1=-$ as a seed.
\end{thm}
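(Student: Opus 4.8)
The plan is to realize $P_\mathcal{S}$ as the limit of the supertiles obtained by iterating the composite substitution $\Phi := F_{a_1}\circ F_{a_2}\circ\cdots\circ F_{a_k}$ on the prescribed seed. First I would record that $\Phi$ really is a substitution rule: each $F_{\pm}$ has inflation multiplier $-2$, and since $k$ is even the composite $\Phi$ has inflation multiplier $(-2)^k=2^k>1$, so it inflates by $2^k$ while preserving the positive/negative character of triangles; in particular $\Phi$ carries a positive unit prototile to a patch filling a positive triangle of side $2^k$. Next I would identify the seed: the triangle prescribed in the statement — the positive unit triangle with three red sides when $a_1=+$, and with three blue sides when $a_1=-$ — is exactly the central unit triangle $T_0$ of $P_\mathcal{S}$, because $T_0$ is a positive unit triangle of the layer $\mathcal{L}_1$ (see Figure \ref{pict:layer}) and, by Lemma \ref{lem:layer}, its three sides are red precisely when $a_1=+$ and blue precisely when $a_1=-$. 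Denote this seed by $S_0$.

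The core step is to show that the $n$-th supertile $\Phi^n(S_0)$ is the finite folding pattern $P_{\mathcal{S}_{nk}}$ up to the colouring of its outermost layer. Here periodicity is used twice. First, because $\mathcal{S}$ has period $(a_1,\dots,a_k)$, unwinding the composite gives
$$\Phi^n=F_{a_1}\circ F_{a_2}\circ\cdots\circ F_{a_{nk}},$$
where the subscripts now run over the first $nk$ terms of $\mathcal{S}$. Second, $nk$ is even, so Lemma \ref{lem:pattern=+-} applies to the truncation $\mathcal{S}_{nk}=\{a_i\}_{i=1}^{nk}$ and, combined with the remark following that lemma (that any positive unit triangle may be used there as the seed), it yields $\Phi^n(S_0)=F_{a_1}\circ\cdots\circ F_{a_{nk}}(S_0)=P_{\mathcal{S}_{nk}}$ away from the boundary. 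Consequently $\Phi^n(S_0)$ fills the positive triangle of side $2^{nk}$ centered at the origin, has $S_0$ as its central unit triangle, and agrees with $P_\mathcal{S}$ on every layer $\mathcal{L}_j$ with $1\le j\le nk-1$. Applying this with $n=1$ also shows that $S_0$ sits inside $\Phi(S_0)$ as a tile (with unchanged decoration, since its sides lie in $\mathcal{L}_1$ and $k\ge 2$), so the supertiles $\Phi^n(S_0)$ form a nested increasing sequence of patches.

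Finally I would pass to the limit. Every unit segment of $\mathcal{L}$ lies in exactly one layer $\mathcal{L}_j$, and is therefore coloured identically in $\Phi^n(S_0)$ and in $P_\mathcal{S}$ as soon as $nk>j$; hence the nested patches $\Phi^n(S_0)$ converge in the local topology of Definition \ref{def:hull} to $P_\mathcal{S}$, which, by Definition \ref{def:pattern}, is precisely $\lim_n P_{\mathcal{S}_{nk}}$. Since $S_0$ is thus a legal seed fixed by $\Phi$ at the origin, the pattern generated by the substitution rule $\Phi$ from $S_0$ is well defined and equals $P_\mathcal{S}$, which is the assertion of the theorem; it specializes to Theorem \ref{thm:sub} in the case $k=2$, $a_1=a_2=+$, where $\Phi=F_+^2=F_P$.

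The main obstacle, I expect, is not a hard computation but the careful bookkeeping around the outermost layer: after only $nk$ elementary foldings the colours of $\mathcal{L}_{nk}$ and beyond are still undetermined, so $\Phi^n(S_0)$ and $P_{\mathcal{S}_{nk}}$ may genuinely disagree there, and one must argue — via the exhaustion of $\mathcal{L}$ by its layers, as above — that these discrepancies escape to infinity and hence do not survive in the limit; all the truly delicate layer-by-layer geometry has, however, already been packaged into Lemma \ref{lem:pattern=+-}. It is also worth checking, exactly as in the all-up case, that $\Phi$ is primitive (some power of the nonnegative integer matrix $M_{a_1}\cdots M_{a_k}$ is strictly positive), so that ``the pattern generated by $\Phi$'' is unambiguous and the Perron-Frobenius machinery becomes available for the sequel.
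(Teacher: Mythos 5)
Your proposal is correct and follows essentially the same route as the paper's proof: identify the seed as the central unit triangle of $\mathcal{L}_1$ whose colour is fixed by $a_1$ via Lemma \ref{lem:layer}, unwind $(F_{a_1}\circ\cdots\circ F_{a_k})^n$ into $F_{a_1}\circ\cdots\circ F_{a_{nk}}$ using periodicity, apply Lemma \ref{lem:pattern=+-} to identify the $n$-th supertile with $P_{\mathcal{S}_{nk}}$, and pass to the limit. Your extra care about the outermost layer, the nesting of supertiles, and primitivity is welcome detail but not a different argument.
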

\begin{rem}
We use an even period of the sequence $\mathcal S$ in order to make existence of a legal seed for the corresponding substitution more clear. Similarly to the sequence of all foldings up with odd period 1, it is simpler to use the substitution rules $F_P$ (see Figure \ref{pict:sub}) with explicit legal seed rather than to use the rules $F_+$ (see Figure \ref{pict:f+}) without explicit seed visible.
\end{rem}
\begin{proof}
Suppose $a_1=+$ as the second case is similar. The pattern $P_\mathcal S$ can be obtained as the limit of patterns $P_{\mathcal S_{kn}}$ where $\mathcal S_{kn}=\{a_i\}_{i=1}^{kn}$. According to Lemma \ref{lem:pattern=+-}
$$P_{\mathcal S_{kn}}=(F_{a_1}\circ F_{a_2}\circ \ldots \circ F_{a_k})^n(\text{\includegraphics[scale=0.4]{basic-111.pdf}}).$$ Moreover, the central unit triangle of $P_{\mathcal S_{kn}}$ is a positive triangle of $\mathcal L_1$, so its sides are red because $a_1=+$.

Hence, $P_{\mathcal S_{kn}}$ can be obtained by using substitution rules $F_{a_1}\circ F_{a_2}\circ \ldots \circ F_{a_k}$ $n$ times to \includegraphics[scale=0.4]{basic-111.pdf} and the whole pattern $P_\mathcal S$ can be generated using substitution rules $F_{a_1}\circ F_{a_2}\circ \ldots \circ F_{a_k}$ applied to \includegraphics[scale=0.4]{basic-111.pdf}.
\end{proof}

Now we turn our attention to spectral properties of such substitutions. Particularly, existence of auxiliary substitutions $F_+$ and $F_-$ will allow us to find all eigenvalues and most eigenvectors using the corresponding matrices $M_+$ and $M_-$.

\begin{defin}\label{def:+-}
Let $M_+$ and $M_-$ denote the substitution matrices for the rules $F_+$ and $F_-$ defined in Figures \ref{pict:f+} and \ref{pict:f-} respectively. So,
$$
M_{+}=\left(
\begin{array}{rrrrrrrr}
0&0&0&0&1&1&1&1\\
0&0&1&3&0&0&0&0\\
0&2&2&0&0&0&0&0\\
3&1&0&0&0&0&0&0\\
1&1&1&1&0&0&0&0\\
0&0&0&0&0&0&1&3\\
0&0&0&0&0&2&2&0\\
0&0&0&0&3&1&0&0\\
\end{array}
\right)
\text{\qquad and \qquad}
M_{-}=\left(
\begin{array}{rrrrrrrr}
0&0&1&3&0&0&0&0\\
0&2&2&0&0&0&0&0\\
3&1&0&0&0&0&0&0\\
0&0&0&0&1&1&1&1\\
0&0&0&0&0&0&1&3\\
0&0&0&0&0&2&2&0\\
0&0&0&0&3&1&0&0\\
1&1&1&1&0&0&0&0\\
\end{array}
\right).
$$

Also, for every finite sequence $\mathcal F=\{a_i\}_{i=1}^k$ we define the corresponding substitution matrix $$M_\mathcal F=M_{a_1}\ldots M_{a_k}.$$ This is exactly the matrix of the substitution pattern for the sequence $\mathcal S$ obtained by repeating $\mathcal F$ periodically if $k$ is even. If $k$ is odd, then the substitution matrix is $M_{\mathcal F}^2$ in order to get even period used in Theorem \ref{thm:periodic}. 
\end{defin}

\begin{lem}\label{lem:eigen}
If $\mathcal F$ is a sequence of length $k$, then eigenvalues of $M_\mathcal F$ are $4^k$, $2^k$, $(-2)^k$, $(-2)^k$, $1$, $1$, $0$, and $0$.
\end{lem}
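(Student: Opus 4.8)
The plan is to compute the eigenvalues of $M_{\mathcal F}=M_{a_1}\cdots M_{a_k}$ by finding a change of basis, independent of the $a_i$, that simultaneously block-triangularizes both $M_+$ and $M_-$. The first thing I would observe is the block structure: writing each of $M_+,M_-$ in the $4{+}4$ block form corresponding to positive vs.\ negative prototiles, both matrices are of the form $\left(\begin{smallmatrix}0&B\\C&0\end{smallmatrix}\right)$ for $M_+$ (and a ``rotated'' version for $M_-$), so a product of $k$ of them is block diagonal if $k$ is even and block anti-diagonal if $k$ is odd; either way $M_{\mathcal F}^2$ is block diagonal and it suffices to understand the two $4\times 4$ diagonal blocks, which are themselves products of the $4\times4$ blocks $B_{a_i},C_{a_i}$. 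So I would first reduce to: for each sequence of $4\times4$ matrices drawn from $\{B_+,C_+,B_-,C_-\}$, understand the spectrum of an alternating product.

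Next I would look for common invariant structure in these $4\times4$ blocks. The all-ones row/column data from the $F_P$ example strongly suggests that $(1,1,1,1)$ (as a row vector) is a common left eigenvector and that the column sums are constant; indeed each column of $M_+$ and of $M_-$ sums to $4$, so $(1,1,1,1,1,1,1,1)$ is a common left eigenvector with eigenvalue $4$ for both, hence a left eigenvector of $M_{\mathcal F}$ with eigenvalue $4^k$ — that accounts for the top eigenvalue. For the next layer I would hunt for a second common left- (or right-) invariant vector: a natural candidate is the vector that is $+1$ on positive prototiles and $-1$ on negative ones (detecting orientation), which under one application of $F_\pm$ should pick up a factor of $-2$ (each inflated triangle of side $2$ contains $3$ triangles of one orientation and $1$ of the other, net $3-1=2$, with a sign flip from the color/orientation swap), giving eigenvalue $(-2)$ for each of $M_+,M_-$ and hence $(-2)^k$ for $M_{\mathcal F}$. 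To get the eigenvalue $2^k$ I would similarly look for a vector with eigenvalue $2$; combining the orientation vector with a within-block sign pattern (e.g.\ weighting tiles by number of red sides mod something) should produce it. After peeling off these three one-dimensional common invariant subspaces, the remaining action is on a $5$-dimensional complement; on a further quotient the matrices $M_+$ and $M_-$ should act identically (they differ only in the coloring of central triangles, which is precisely the data that lives in the part already split off), forcing the remaining eigenvalues to be the same for every $\mathcal F$, namely $1,1,0,0$ — which I would pin down by computing the characteristic polynomial of either $M_+$ or $M_-$ restricted to that complement, or simply by invoking the known spectrum $\{16,4,4,4,1,1,0,0\}$ of $M_P=M_+^2$.

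Concretely the steps are: (1) exhibit the $4{+}4$ anti-block structure of $M_+,M_-$ and reduce to $4\times4$ blocks of $M_{\mathcal F}$ (or $M_{\mathcal F}^2$); (2) verify $(1,\dots,1)$ is a common left eigenvector with eigenvalue $4$, giving $4^k$; (3) find the common eigenvector with eigenvalue $-2$ (orientation), giving $(-2)^k$; (4) find the common eigenvector with eigenvalue $2$, giving $2^k$; (5) show that on the remaining $5$-dimensional space — after also quotienting by one more common invariant line forced by symmetry — $M_+$ and $M_-$ induce the same nilpotent-plus-identity map with eigenvalues $1,1,0,0$, independent of the word $\mathcal F$; (6) assemble: the eigenvalues of $M_{\mathcal F}$ are $4^k,2^k,(-2)^k,(-2)^k,1,1,0,0$ (using that the claimed list is invariant under $M_{\mathcal F}\mapsto M_{\mathcal F}^2$ only up to the obvious squaring, so for odd $k$ one reads them off the block-anti-diagonal form directly, and for even $k$ off the block-diagonal form). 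I would also note that the multiplicities $1,1$ for eigenvalue $1$ and $0,0$ for eigenvalue $0$ match what one gets for the single matrix $M_+$, which is the base case $k=1$.

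The main obstacle I expect is step (5): showing that the ``leftover'' $4\times4$ (or smaller) action is genuinely common to $M_+$ and $M_-$ and has the stated spectrum regardless of the sequence. The difference between $F_+$ and $F_-$ is exactly the coloring of the central subtriangle, and one has to argue carefully that this difference is entirely absorbed by the invariant subspaces split off in steps (2)–(4), so that the induced maps on the complement literally coincide; this is a linear-algebra bookkeeping task that requires an explicit, well-chosen basis rather than a conceptual shortcut. A safer but less illuminating route, which I would fall back on if the clean basis proves elusive, is to simply compute $\det(M_{a_1}\cdots M_{a_k}-\lambda I)$ by exploiting the sparsity and the shared $4$- and $(-2)$-eigenvectors to reduce to a small determinant whose dependence on the $a_i$ visibly cancels.
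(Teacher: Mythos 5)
There is a genuine gap, and also two factual errors in the structural claims your plan rests on. First, neither $M_+$ nor $M_-$ is block anti-diagonal with respect to the positive/negative split: the inflated side-$2$ triangle contains three unit triangles of the \emph{same} orientation as the original and one of the opposite orientation, so each ``positive'' column of $M_+$ has mass $3$ in the positive block and $1$ in the negative block (e.g.\ the first column of $M_+$ is $(0,0,0,3,1,0,0,0)^t$). The reduction to $4\times 4$ blocks in your step (1) therefore does not exist. Relatedly, the orientation vector $(1,1,1,1,-1,-1,-1,-1)$ is a common \emph{left} eigenvector of both matrices with eigenvalue $+2$, not $-2$ (net count $3-1=+2$, no sign flip); it accounts for the simple eigenvalue $2^k$, whereas $(-2)^k$ occurs with multiplicity two and does not arise from a single common eigenvector of this kind. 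Your bookkeeping of which eigenvalue is simple and which is double is inverted as a result.

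The essential gap is step (5). It is false that $M_+$ and $M_-$ induce the same map on a complement of the shared eigenvectors: they genuinely differ there, and since they do not commute, knowing the two individual spectra plus a few common eigenvectors does not determine the spectrum of an arbitrary word $M_{a_1}\cdots M_{a_k}$. What closes the argument --- and what the paper does --- is to exhibit a single explicit basis (the columns of a matrix $C$ of eigenvectors of $M_+$) in which $C^{-1}M_+C$ is diagonal with diagonal $(4,2,-2,-2,1,1,0,0)$ and $C^{-1}M_-C$ is \emph{lower triangular with the same diagonal}. Then every word in $M_+,M_-$ is lower triangular in that basis, its diagonal entries multiply, and the eigenvalues $4^k,2^k,(-2)^k,(-2)^k,1,1,0,0$ fall out. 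So the correct weakening of your claim is not ``the induced maps coincide on the quotient'' but ``both matrices preserve a common full flag and have matching diagonals along it''; without producing that flag explicitly the proof does not close. Your fallback of expanding $\det(M_{a_1}\cdots M_{a_k}-\lambda I)$ directly is not a routine cancellation and would in practice amount to rediscovering the same simultaneous triangularization.
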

\begin{proof}
Let 
$$C=\left(
\begin{array}{rrrrrrrr}
 1 & -2 & 0 & 0 & 0 & 0 & -1 & 0 \\
 1 & 0 & -2 & 0 & 1 & 0 & 3 & 0 \\
 1 & 9 & 1 & 0 & -2 & 0 & -3 & 0 \\
 1 & -3 & 1 & 0 & 1 & 0 & 1 & 0 \\
 1 & 2 & 0 & 0 & 0 & 0 & 0 & -1 \\
 1 & 0 & 0 & -2 & 0 & 1 & 0 & 3 \\
 1 & -9 & 0 & 1 & 0 & -2 & 0 & -3 \\
 1 & 3 & 0 & 1 & 0 & 1 & 0 & 1 \\
\end{array}
\right),$$
so the columns of $C$ are eigenvectors of $M_+$. Then
$$C^{-1}M_+C=\left(
\begin{array}{rrrrrrrr}
 4 & 0 & 0 & 0 & 0 & 0 & 0 & 0 \\
 0 & 2 & 0 & 0 & 0 & 0 & 0 & 0 \\
 0 & 0 & -2 & 0 & 0 & 0 & 0 & 0 \\
 0 & 0 & 0 & -2 & 0 & 0 & 0 & 0 \\
 0 & 0 & 0 & 0 & 1 & 0 & 0 & 0 \\
 0 & 0 & 0 & 0 & 0 & 1 & 0 & 0 \\
 0 & 0 & 0 & 0 & 0 & 0 & 0 & 0 \\
 0 & 0 & 0 & 0 & 0 & 0 & 0 & 0 \\
\end{array}\right)$$
and $$C^{-1}M_-C=\left(
\begin{array}{rrrrrrrr}
 4 & 0 & 0 & 0 & 0 & 0 & 0 & 0 \\
 0 & 2 & 0 & 0 & 0 & 0 & 0 & 0 \\
 0 & -8 & -2 & 0 & 0 & 0 & 0 & 0 \\
 0 & 8 & 0 & -2 & 0 & 0 & 0 & 0 \\
 0 & 14 & 6 & 0 & 1 & 0 & 0 & 0 \\
 0 & -14 & 0 & 6 & 0 & 1 & 0 & 0 \\
 0 & -4 & -4 & 0 & -1 & 0 & 0 & 0 \\
 0 & 4 & 0 & -4 & 0 & -1 & 0 & 0 \\
\end{array}
\right).$$

Then $C^{-1}M_\mathcal FC$ is a lower triangular matrix because it is a product of $k$ matrices $C^{-1}M_+C$ or $C^{-1}M_-C$. Moreover, the diagonal entries of $C^{-1}M_{\mathcal F}C$ and hence the eigenvaules of $M_\mathcal F$ are $4^k, 2^k, (-2)^k, (-2)^k, 1, 1, 0, 0$.
\end{proof}

It is possible to find five eigenvectors of $M_\mathcal F$ with eigenvalues $4^k$, $1$, $1$, $0$ and $0$ and show existence of two more eigenvectors with eigenvalues $(-2)^k$ each. Particularly, vector $(1,1,1,1,1,1,1,1)^t$ is an eigenvector of $M_{\mathcal F}$ with eigenvalue $4^k$ because it is an eigenvector of both $M_+$ and $M_-$ with eigenvalue $4$. Similarly, vectors $(1,- 3, 3, -1, 0, 0, 0, 0)^t$ and $(0,0,0,0,1, -3, 3, -1)^t$ are eigenvectors of $M_\mathcal F$ with eigenvalue $0$ because they are eigenvectors of both $M_+$ and $M_-$ with eigenvalue $0$.

To find eigenvectors with eignevalue $1$, we let $\mathbf u_+=(0,1,-2,1,0,0,0,0)^t$ and $\mathbf u_-=(1,-2,1,0,0,0,0,0)^t$. Then $M_+\mathbf u_+=M_+\mathbf u_-=\mathbf u_+$ and $M_-\mathbf u_+=M_-\mathbf u_-=\mathbf u_-$. Therefore, if $a_1=+$, then $M_\mathcal F\mathbf u_+=\mathbf u_+$ and if $a_1=-$, then $M_\mathcal F\mathbf u_-=\mathbf u_-$, so one of these vectors is an eigenvector of $M_\mathcal F$ with eigenvalue $1$. Similarly, one of the vectors $(0, 0, 0, 0, 0, 1, -2, 1)^t$ or $(0, 0, 0, 0, 1, -2, 1, 0)^t$ is an eigenvector of $M_\mathcal F$ with eigenvalue $1$.

Also from matrices $C^{-1}M_+C$ and $C^{-1}M_-C$ we can see that the 3rd, the 5th, and the 7th column of the matrix $C$ form a three-dimensional invariant subspace for both $M_+$ and $M_-$; this subspace is given by the linear system $x_1+x_2+x_2+x_4=x_5=x_6=x_7=x_8=0$. Restriction of both $M_+$ and $M_-$ as well as the product $M_\mathcal F$ on this subspace have lower triangular matrices in the basis given by columns of $C$, therefore the restriction of $M_\mathcal F$ has eigenvalues $(-2)^k$, $1$, and $0$, and $M_\mathcal F$ has an eigenvector with eigenvalue $(-2)^k$ in this subspace. We can use similar arguments to show existence of another eigenvector of $M_\mathcal F$ with eigenvalue $(-2)^k$ in the invariant subspace $x_1=x_2=x_3=x_4=x_5+x_6+x_7+x_8=0$ spanned by the 4th, the 6th, and the 8th column of $C$. It is possible to find the coordinates of these eigenvectors explicitly using either the sequence $\mathcal F$ or the types of triangles adjacent to the sides of the triangle $$P_\mathcal F=F_{a_1}\circ F_{a_2}\circ \ldots \circ F_{a_k}(\text{\includegraphics[scale=0.4]{basic-111.pdf}}).$$

The last eigenvalue of $M_\mathcal F$ is $2^k$. If $k$ is odd, then there is an eigenvector with this eigenvalue, so $M_\mathcal F$ is diagonalizable. If $k$ is even, then $2^k=(-2)^k$ and the additional eigenvector can't be guaranteed. Moreover, if $\mathcal F=\{+,-\}$ then $M_\mathcal F$ is not diagonalizable, so the additional eigenvector does not always exist.

We conclude the section with the theorem that combines this information about eigenvalues and eigenvectors.

\begin{thm}\label{thm:eigensystem}
Let $\mathcal S$ be a periodic sequence of elementary foldings with an even period $k$. Then the matrix $M_\mathcal S$ correspoinding to the substitution rules from Theorem $\ref{thm:periodic}$ has eigenvalues $4^k, 2^k,2^k,2^k,1,1,0,0$. Moreover,
\begin{itemize}
\item $4^k$ is the PF-eigenvalue of $M_\mathcal S$ with eigenvector $(1,1,1,1,1,1,1,1)^t$ so all eight types of triangles have density $\frac18$ in the pattern $P_\mathcal S$;
\item the eigenspace of $M_\mathcal S$ with eigenvalue $1$ is spanned by $(0,1,-2,1,0,0,0,0)^t$ and $(0, 0, 0, 0, 0, 1, -2, 1)^t$ if the first element of $\mathcal S$ is $+$ and is spanned by $(1,-2,1,0,0,0,0,0)^t$ and $(0, 0, 0, 0, 1, -2, 1,0)^t$ if the first element of $\mathcal S$ is $-$;
{\sloppy

}
\item the eigenspace of $M_\mathcal S$ with eigenvalue $0$ is spanned by $(1,- 3, 3, -1, 0, 0, 0, 0)^t$ and $(0,0,0,0,1, -3, 3, -1)^t$;
{\sloppy

}
\item the eigenspace corresponding to the eigenvalue $2^k$ has dimension $2$ or $3$.
\end{itemize}
\end{thm}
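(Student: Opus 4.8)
The plan is to consolidate Lemma~\ref{lem:eigen} with the explicit eigenvectors produced in the discussion preceding the theorem, and then to pin down the two eigenspaces whose dimension is not yet settled there: the $1$-eigenspace and the $2^k$-eigenspace.

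First I would read off the eigenvalues. Since the period $\mathcal F=\{a_1,\dots,a_k\}$ has length $k$, Lemma~\ref{lem:eigen} gives that $M_{\mathcal S}=M_{\mathcal F}$ has eigenvalues $4^k,2^k,(-2)^k,(-2)^k,1,1,0,0$, and for even $k$ one has $(-2)^k=2^k$, so the list is $4^k,2^k,2^k,2^k,1,1,0,0$. Next I would note that every row of $M_+$ and of $M_-$ sums to $4$, so $(1,\dots,1)^t$ is a right eigenvector of each with eigenvalue $4$, whence $M_{\mathcal S}(1,\dots,1)^t=4^k(1,\dots,1)^t$. Since $4^k>2^k>1>0$, the number $4^k$ is strictly dominant, and being the eigenvalue of a nonnegative matrix that admits a strictly positive eigenvector it is the Perron--Frobenius eigenvalue, with $(1,\dots,1)^t$ the right PF eigenvector; normalising gives frequency $\tfrac18$ for each of the eight tile types. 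Turning the PF eigenvector into an honest density statement is the only point that needs more than bookkeeping: it rests on primitivity of the substitution $M_{\mathcal S}$, which one can check directly (sufficiently long products of $M_+$ and $M_-$ are strictly positive) or invoke after the fact from Theorem~\ref{thm:densities}.

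For eigenvalues $1$ and $0$ I would use that each has algebraic multiplicity exactly $2$ by Lemma~\ref{lem:eigen} (and neither equals $4^k$ or $2^k$), so it is enough to exhibit two independent eigenvectors in each case. For eigenvalue $1$, the vectors $\mathbf u_+=(0,1,-2,1,0,0,0,0)^t$ and $\mathbf u_-=(1,-2,1,0,0,0,0,0)^t$ satisfy $M_+\mathbf u_+=M_+\mathbf u_-=\mathbf u_+$ and $M_-\mathbf u_+=M_-\mathbf u_-=\mathbf u_-$; telescoping along the period then gives $M_{\mathcal S}\mathbf u_{a_1}=\mathbf u_{a_1}$, and the identical computation in coordinates $5$ through $8$ supplies the second eigenvector, $(0,0,0,0,0,1,-2,1)^t$ when $a_1=+$ and $(0,0,0,0,1,-2,1,0)^t$ when $a_1=-$. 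These two have disjoint support, hence are independent and, by the multiplicity count, span the whole $1$-eigenspace. For eigenvalue $0$ the vectors $(1,-3,3,-1,0,0,0,0)^t$ and $(0,0,0,0,1,-3,3,-1)^t$ lie in $\ker M_+\cap\ker M_-$, so in $\ker M_{\mathcal S}$, are independent, and span the $0$-eigenspace for the same reason.

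Finally, for the eigenvalue $2^k=(-2)^k$, which occurs three times in the list of Lemma~\ref{lem:eigen}, the geometric multiplicity is at most $3$ and the task is the lower bound $2$. I would use the two $M_\pm$-invariant $3$-dimensional subspaces identified via the columns of $C$: $V_1$ spanned by the $3$rd, $5$th, $7$th columns of $C$ (equivalently $x_1+x_2+x_3+x_4=x_5=x_6=x_7=x_8=0$) and $V_2$ spanned by the $4$th, $6$th, $8$th columns (equivalently $x_1=x_2=x_3=x_4=x_5+x_6+x_7+x_8=0$). On each $V_i$ both $C^{-1}M_+C$ and $C^{-1}M_-C$ restrict to lower-triangular $3\times 3$ matrices with diagonal $(-2,1,0)$, so $M_{\mathcal S}$ restricts on $V_i$ to a lower-triangular matrix with the three distinct diagonal entries $(-2)^k,1,0$; distinctness forces diagonalizability, so $M_{\mathcal S}|_{V_i}$ has an eigenvector with eigenvalue $(-2)^k$. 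Since $V_1\cap V_2=\{0\}$ these two eigenvectors are independent, so the $2^k$-eigenspace has dimension $2$ or $3$; both values occur (dimension $3$ for the all-up period and $2$ for the period $\{+,-\}$, as observed before the theorem), so the statement is sharp. Apart from the primitivity input noted above, every step follows directly from Lemma~\ref{lem:eigen} and the conjugated matrices $C^{-1}M_\pm C$, so I do not anticipate any real difficulty.
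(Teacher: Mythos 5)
Your proof is correct and follows essentially the same route as the paper: it reads the eigenvalues off Lemma~\ref{lem:eigen} and assembles the eigenvectors and invariant subspaces already exhibited in the discussion preceding the theorem. The only additions are ones the paper leaves implicit --- the algebraic-multiplicity count showing the $1$- and $0$-eigenspaces are exactly two-dimensional, and the explicit acknowledgment that the density conclusion requires primitivity (or an appeal to Theorem~\ref{thm:densities}).
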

\begin{proof}
It is enough to recall that from Theorem \ref{thm:periodic} the substitution matrix $M_\mathcal S$ for the rules generating $P_\mathcal F$ is a product of $k$ matrices $M_+$ and $M_-$ and then use Lemma \ref{lem:eigen} for an even $k$ to get the eigenvalues.

Then $4^k$ is the PF-eigenvalue of $M_\mathcal S$ and the PF-eigenvector of $M_\mathcal S$ is $(1,1,1,1,1,1,1,1)^t$ which means that densities of all eight types of triangles are equal. Moreover, similarly to Section \ref{sec:allup} we can use 3-fold rotational symmetry as well.

Existence of four other eigenvectors and the dimension of the last eigenspace follow from the discussion before this theorem.
\end{proof}

\section{Densities for arbitrary sequences}\label{sec:density}

The goal of this section is to prove that for every sequence $\mathcal{S}$ of elementary foldings the densities of all types of unit triangles (four types of positive colorings, and four types of negative colorings) in $P_\mathcal{S}$ are equal to $\frac18$. In case $\mathcal S$ is periodic this statement was established in Theorem \ref{thm:eigensystem} using Perron-Frobenuis theory. For an arbitrary sequence $\mathcal S$ we can use a similar theory for mixed substitutions however first we present an elementary proof.

\begin{thm}\label{thm:densities}
Let $\mathcal{S}$ be a sequence of elementary foldings. Then the densities of all types of unit triangles are equal in $P_\mathcal{S}$.
\end{thm}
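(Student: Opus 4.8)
The plan is to reduce the density statement to counting unit triangles of each type inside the large triangles $P_{\mathcal S_n}$ and to exploit the layer decomposition from Section~\ref{sec:lemmas}. For a fixed $n$, the triangle $P_{\mathcal S_n}$ has side length $2^n$ and is subdivided into $4^n$ unit triangles, each of which is a unit triangle of the grid $\mathcal L$; its type is determined by the colors of its three sides. By Lemma~\ref{lem:layer}, the coloring of each side is controlled by the layer $\mathcal L_j$ it belongs to, and the parity of $j$ together with $a_j$ determines whether that side is red or blue. So the type of a given unit triangle $t$ is a function only of the three layer-indices of its three sides and the corresponding entries of $\mathcal S$. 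The first step is therefore to record, for a unit triangle of the grid, which triple of layers its three sides belong to — call it the \emph{layer type} of $t$ — and to observe there are only finitely many layer types occurring (in fact the layer indices of the three sides of a unit triangle are tightly constrained: two of them lie in $\mathcal L_1$ and the third in some $\mathcal L_j$, or similar small configurations, as in the proof of Lemma~\ref{lem:hexagon}).

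Next I would count how many unit triangles of each layer type appear in $P_{\mathcal S_n}$. The key point is that this count is \emph{independent of $\mathcal S$}: it is a purely geometric quantity coming from how the layers $\mathcal L_1,\ldots,\mathcal L_n$ intersect the triangle of side $2^n$, and the layers are nested dilations of one another by the factor $-2$. From the recursive/self-similar structure (a triangle of side $2^n$ is built from a central triangle of side $2^{n-1}$ plus three reflected side copies, and the layer structure is compatible with this as shown in Lemma~\ref{lem:layer}), one gets a simple recursion for the number $N_\alpha(n)$ of unit triangles of layer type $\alpha$. The crucial structural input — this is the main obstacle — is to show that the contributions of the ``deep'' layers (large $j$) are negligible in density: the number of unit triangles having a side in $\mathcal L_j$ grows like $4^{n}\cdot 2^{-cj}$ (geometrically decaying in $j$), so in the limit only boundedly many layers contribute to any fixed positive-density behavior, and the boundary layer $\mathcal L_n$ itself contributes density $O(2^{-n})\to 0$. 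Once this is in place, for each fixed finite collection of layers the colors are dictated by finitely many entries $a_1,\ldots,a_j$ of $\mathcal S$, and I would check by the symmetry $+\leftrightarrow -$ (which swaps red and blue and hence swaps each positive type with the complementary one, together with the built-in three-fold rotational structure) that the eight types split into equinumerous classes at \emph{every} scale.

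The cleanest way to finish is then an averaging/pairing argument: exhibit, for each layer type $\alpha$ and each choice of the relevant entries of $\mathcal S$, a bijection (a symmetry of the grid composed with the color-swap) between unit triangles of each of the eight final types contributing through $\alpha$, so that within each layer type the eight colorings are perfectly balanced up to the negligible boundary. Summing over the finitely many layer types and using the geometric decay of the deep-layer count to absorb error terms, one concludes
$$
\lim_{n\to\infty}\frac{\#\{\text{unit triangles of type }c\text{ in }P_{\mathcal S_n}\}}{4^n}=\frac18
$$
for each of the eight types $c$, uniformly in $\mathcal S$, which is the assertion. The alternative route flagged by the author — invoking Perron--Frobenius theory for mixed substitutions via Lemma~\ref{lem:pattern=+-}, using that both $M_+$ and $M_-$ share the left eigenvector $(1,\ldots,1)$ and right eigenvector $(1,\ldots,1)^t$ for the top eigenvalue $4$, so every finite product $M_{a_1}\cdots M_{a_k}$ has the same balanced Perron data — would give the same conclusion more slickly but requires importing the mixed-substitution density theorem; the elementary layer-counting argument above is self-contained and is, I expect, what the paper carries out. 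The main difficulty in either approach is the uniform control of the error from deep layers, i.e.\ proving the geometric decay estimate on the number of unit triangles meeting $\mathcal L_j$.
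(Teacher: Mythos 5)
Your primary route has a genuine gap at its key step, the ``pairing within each layer type.'' It is simply not true that the eight colorings are balanced inside a fixed layer type. Take the layer type $(1,1,1)$, i.e.\ the unit triangles that are themselves triangles of $\mathcal L_1$: by Lemma \ref{lem:layer} every positive one has all three sides of one color and every negative one has all three sides of the other color, so this layer type realizes exactly two of the eight types and contributes density $\tfrac18$ to each of those two and $0$ to the remaining six. The equality of the eight densities is therefore not a scale-by-scale symmetry phenomenon; it emerges only after summing \emph{unequal} contributions over infinitely many layer types (the triangles inside hexagons of $\mathcal L_1$, the hexagons of $\mathcal L_2$ nested inside them, etc.), and your proposed bijection cannot exist. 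The $+\leftrightarrow-$ symmetry does not rescue this: flipping all colors carries $P_{\mathcal S}$ to the pattern of the \emph{negated} sequence, not to $P_{\mathcal S}$ itself, so it relates densities across two different patterns rather than pairing types within one. (A smaller inaccuracy: the possible layer types are ``all three sides in $\mathcal L_1$'' or ``one side in $\mathcal L_1$ and the other two in layers $\ge 2$,'' not ``two sides in $\mathcal L_1$.'') Your geometric-decay estimate for deep layers is fine, but without a correct mechanism for computing the color distribution within each layer type the argument does not close, and carrying that computation out by hand is essentially re-deriving the matrix identity you were trying to avoid.

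The alternative you flag in passing is in fact the paper's proof, and it needs no imported mixed-substitution theorem. By Lemma \ref{lem:pattern=+-} the type-count vector of $P_{\mathcal S_k}$ is $M_{a_1}\cdots M_{a_k}\mathbf x$ for a standard basis vector $\mathbf x$. Write $\mathbf x=\tfrac18\mathbf u+\mathbf z$ with $\mathbf u=(1,\dots,1)^t$ and $\mathbf z\in\mathbf u^{\perp}$; both $M_+$ and $M_-$ fix the line through $\mathbf u$ with eigenvalue $4$ and preserve $\mathbf u^{\perp}$, where their growth is bounded by $2^k$ rather than $4^k$. Hence $M_{\mathcal S_k}\mathbf x/4^k\to\tfrac18\mathbf u$, giving density $\tfrac18$ for each type for \emph{every} sequence $\mathcal S$. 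I'd recommend abandoning the layer-counting route and writing up this two-line spectral argument instead.
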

\begin{proof}
Recall that both matrices $M_+$ and $M_-$ have the same Perron-Frobenius eigenvalue 4 and the same Perron-Frobenius eigenvector $\mathbf u=(1,1,1,1,1,1,1,1)^t$. Also the orthogonal complement $\mathbf u^\perp$ given by $x_1+x_2+x_3+x_4+x_5+x_6+x_7+x_8=0$ is an invariant subspace for both $M_+$ and $M_-$. Absolute values of eigenvalues of restrictions of $M_+$ and $M_-$ on $\mathbf  u^\perp$ are at most $2$.

Suppose that $\mathbf x=\mathbf y +\mathbf z$ where $\mathbf x$ is a multiple of $\mathbf u$ and $\mathbf z\in \mathbf u^\perp$. Let $\mathcal S_k$ be the sequence of first $k$ terms of $\mathcal S$. Then
$$
\lim_{k\rightarrow \infty}\frac{M_{\mathcal S_k}\mathbf x}{4^k}=\mathbf y
$$
because $M_+\mathbf y=M_-\mathbf y=4\mathbf y$, $||M_+\mathbf z||_2\leq 2||\mathbf z||_2$ and $||M_-\mathbf z||_2\leq 2||\mathbf z||_2$. Here the limit means coordinate-wise convergence.

If we start from a single triangle and apply substitutions $F_+$ and $F_-$ repeatedly, then the corresponding initial vector $\mathbf x$ that counts types of triangles has single non-zero entry equal to 1, and the corresponding limit of ratio  $\frac{M_{\mathcal S_k}\mathbf x}{4^k}$ that represent densities exists and equal to $\frac 18\mathbf u$. This means that the densities of all types of unit triangles are equal to $\frac 18$ in $P_\mathcal S$.
\end{proof}

\begin{rem}
Another approach to the densities uses mixed substitutions. The mixed substitution system generated by $\mathcal S$ using $F_+$ and $F_-$ is a primitive one. According to \cite[Sect. 2.5]{BST} (though it uses one-dimensional sequence of letters rather than two-dimensional tilings), $(1,1,1,1,1,1,1,1)^t$ is a generalized Perron-Frobenius eigenvector for the mixed substitution system defined by $\mathcal S$ and it encodes the densities of triangles types in case $\mathcal S$ is a repetitive sequence. Thus, densities of all types of triangles are equal for such $\mathcal S$.

For an arbitrary sequence we can use results of \cite[Sections 3.4 and 3.5]{FS} to show unique ergodicity of associated hulls and existence of densities that are encoded by common PF-eigenvector of $M_+$ and $M_-$. However, this also requires that our substitutions should be strictly primitive meaning that we need to combine several instances of $F_+$ and $F_-$ into one substitution step prior to using this approach.
\end{rem}

\section{Further paperfolding patterns}

We conclude the paper with several possible avenues to study paperfolding patterns. This includes other possible ways to get patterns on the grid $\mathcal L$ as well as patterns on other grids. Unfortunately, we are unable to provide much insight for most of the cases, but questions on existence of substitution rules as well as topological, spectral, and dynamical properties of all such patterns might be interesting.

\subsection{Mixed folding patterns on $\mathcal L$.} In general, we are not required to have all three parts of an elementary folding in Definition \ref{def:elementary} to be performed through one half-space of the ambient space. This will give eight options for what we will call {\it mixed elementary foldings} of a triangle with side length $2a$ assuming we can independently choose which half-space is used for every side triangle with side length $a$.

Most preliminary results of the current paper are true for mixed foldings as well, but the coloring of the layers could be different now. Particularly, Lemma \ref{lem:layer} and Theorem \ref{thm:aperiodic} hold for such mixed substitutions, but Lemma \ref{lem:hexagon} on colorings of segments incident to one vertex of $\mathcal L$ does not hold for mixed substitutions.

In particular, if a mixed elementary folding is not one of the elementary foldings introduced before, then the layer $\mathcal{L}_1$ is colored according to Figure \ref{pict:mixed} (or its rotation) if the first mixed elementary folding in a sequence defining a pattern has two operations performed through the upper half-space, and one through the lower half-space. In that case there will be vertices with coloring of six incident segments other than shown in  Figure \ref{pict:local}.

\begin{figure}[!ht]
\begin{center} 
\includegraphics[width=0.8\textwidth]{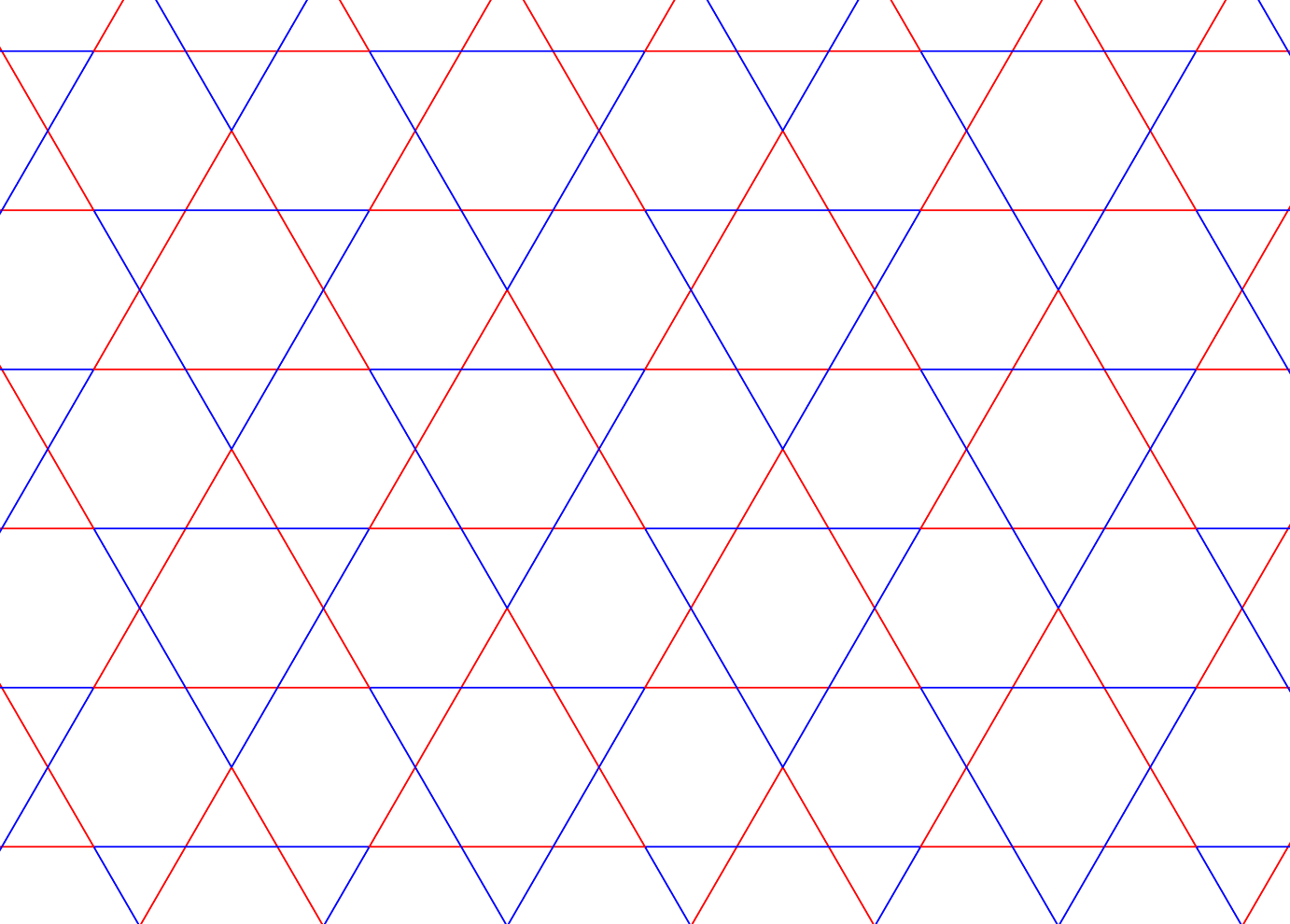}
\caption{A coloring of $\mathcal{L}_1$ for a mixed folding.}
\label{pict:mixed}
\end{center}
\end{figure}

We can not provide more details on the structure of the resulting pattern at this point. In particular, the densities of the unit triangles are no longer equal as can be seen from the layer $\mathcal{L}_1$ in Figure \ref{pict:mixed}. For example, all positive triangles from this layer have one red side and two blue sides which implies that the density of this type is at least $\frac18$. However more such triangles could appear in the hexagons of the layer $\mathcal{L}_1$.

\subsection{Other grids.} We can try to use other types of grids as our underlying structure for coloring of peaks and valleys. Particularly, in this paper we use the triangular grid $\mathcal L$, but some generalizations of the classical paperfolding sequence are using the square grid as the underlying structure, see \cite{BQS} for example. In order to use a similar approach, we can start from a shape, say a polygon, $\mathcal P$ that can be folded into a scaled copy of $\mathcal P$. While for an arbitrary $\mathcal P$ it may generate an ``irregular'' picture after repeated foldings and unfoldings (for example we fold a regular $7$-gon along seven segments connecting midpoints of adjacent sides), for certain shapes, such as a regular triangle in this paper or a square in \cite{BQS}, we can get ``nicer'' structures.

As another example of such shape, we can fold an isosceles right triangle with leg length $2$ along its shorter midsegment, then along its altitude, and finally along the second shorter midsegment. In the end we get a scaled copy of the initial triangle with leg length $1$. Assuming all foldings were done through the upper halfspace, once we unfold the triangle back, we will get the pattern given in Figure \ref{pict:right} (left).

If we perform the same procedure once again through the upper halfspace, then we get a pattern inside a triangle with leg length $4$. This pattern is also given in Figure \ref{pict:right}. Following the approach of Definition \ref{def:pattern}, we can define an infinite folding pattern using the described basic folding. This pattern will use a part of the square grid where each square is dissected with a diagonal. With a proper choice of a smaller triangle inside, we can extend this pattern onto the whole grid of dissected squares.

\begin{figure}[!ht]
\begin{center} 
\includegraphics[scale=0.5]{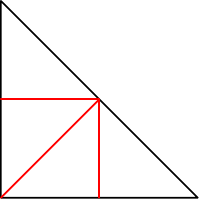}
\qquad \qquad
\includegraphics[scale=0.5]{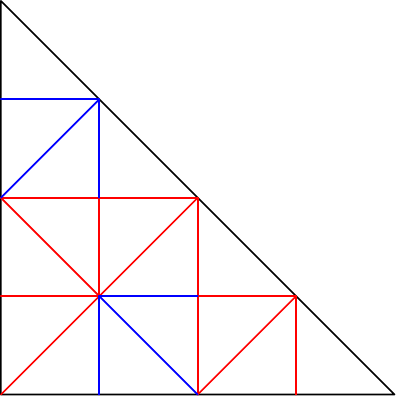}
\caption{Folding patterns inside isosceles right triangles.}
\label{pict:right}
\end{center}
\end{figure}

For this and similarly generated patterns, the questions of periodicity of the coloring as well as the underlying set itself look relevant as well. However, for the particular approach described above, the dissections of squares in the grid form a periodic pattern.

\section*{Acknowledgments}

The author is thankful to Dirk Frettl\"oh, Franz G\"ahler, and Johan Nilsson for fruitful discussions. The author is also thankful to CRC 701 of the Bielefeld University for support and hospitality.

Additionally, the author is thankful to all the authors of the ColorBrewer software \cite{CB} for the color scheme used in the colored tilings.

The author is also thankful to anonymous referees for numerous suggested improvements and particularly for pointing out to existence of substitutions $F_+$ and $F_-$ that simplified the exposition of Sections \ref{sec:periodic} and \ref{sec:density}.

\end{document}